\newtheorem{theorem}[equation]{Theorem}
\newtheorem{proposition}[equation]{Proposition}
\newtheorem{corollary}[equation]{Corollary}
\newtheorem{lemma}[equation]{Lemma}
\newtheorem{conj}[equation]{Conjecture}
\theoremstyle{remark}
\newtheorem{remark}[equation]{Remark}
\numberwithin{equation}{subsection}
\newcommand{\Vol}{\operatorname{Vol}}
\newcommand{\Tr}{\operatorname{Tr}}
\newcommand{\Id}{\operatorname{Id}}
\newcommand{\D}{\mathcal{D}}
\begin{document}
\title{Heat invariants of the Steklov problem}
\author{Iosif Polterovich}
\address{D\'e\-par\-te\-ment de math\'ematiques et de
sta\-tistique, Univer\-sit\'e de Mont\-r\'eal CP 6128 succ
Centre-Ville, Mont\-r\'eal QC  H3C 3J7, Canada.}
\email{iossif@dms.umontreal.ca}
\author{David A. Sher}
\address{Department of Mathematics and
Statistics, McGill University, 805 Sherbrooke Str. West,
Montr\'eal QC H3A 2K6, Ca\-na\-da.}
\address{Centre de Recherches Mathematiques, Univer\-sit\'e de Mont\-r\'eal CP 6128 succ
Centre-Ville, Mont\-r\'eal QC  H3C 3J7, Canada.}
\email{david.sher@mail.mcgill.ca}

\begin{abstract} We study the heat trace asymptotics associated with the Steklov eigenvalue problem on a Riemannian manifold with boundary. In particular, we
describe the structure of the Steklov heat invariants and compute the first few of them explicitly in terms of the scalar and mean curvatures. This is done by applying the Seeley calculus to the Dirichlet-to-Neumann operator, whose spectrum coincides with the Steklov eigenvalues. As an application, it is proved that a three--dimensional ball is uniquely defined by its Steklov spectrum among all Euclidean domains with smooth connected boundary.
\end{abstract}
\maketitle
\section{Introduction and main results}
\subsection{Steklov eigenvalue problem}
Let $\Omega$ be a smooth compact Riemannian manifold of dimension $n$ with smooth boundary $M=\partial \Omega$ of dimension $n-1$.  
Consider the Steklov eigenvalue problem on $\Omega$:
\begin{equation}
\label{stek}
\begin{cases}
  \Delta u=0& \mbox{ in } \Omega,\\
  \frac{\partial u}{\partial\nu}=\sigma \,u& \mbox{ on
  }M.
\end{cases}
\end{equation}
The spectrum of the Steklov problem is discrete and is given by a sequence of eigenvalues
$$
0=\sigma_0\leq \sigma_1 \le \sigma_2 \le \dots \nearrow \infty.
$$
The restrictions of the corresponding eigenfunctions to the boundary form an orthogonal basis in $L^2(M)$.

Geometric properties of  Steklov eigenvalues on Riemannian manifolds  have been actively investigated in the recent years, see \cite{CEG, FS1, FS2,  GP2,
Ja, KKP}.  In particular, various estimates on eigenvalues and their multiplicities have been obtained. 

\subsection{Steklov heat invariants}
\label{sec:heatinv}
The present paper focuses on geometric invariants associated with the Steklov problem. 
Steklov eigenvalues $\sigma_n$ may be also viewed as the eigenvalues of the Dirichlet-to-Neumann operator $\D:C^{\infty}(M) \to C^\infty(M)$,  given by $\D f=\partial_\nu(Hf)$, where $f \in C^{\infty}(M)$, $Hf \in C^\infty(\Omega)$ is its harmonic extension to $\Omega$ and $\partial_\nu$ denotes the outward normal derivative.
It is well known that $\D$ is a pseudodifferential operator of order one (see \cite[pp. 37-38]{Ta}).  
As follows from Weyl's law for the distribution of eigenvalues,  the dimension of $\Omega$ as well as the $(n-1)$--dimensional volume of the boundary $M=\partial \Omega$ can be determined from the Steklov spectrum.  Indeed,  by the results of \cite[Chapter 29]{ho},  the eigenvalue counting function of the Dirichlet-to-Neumann operator satisfies the asymptotics
\begin{equation}
\label{Weyllaw}
\#(\sigma_k \le \sigma) =  C_n \Vol(M) \sigma^{n-1} + O(\sigma^{n-2}),
\end{equation}
where $C_n$ is a constant depending on the dimension only (the explicit value of $C_n$ can be easily deduced from \eqref{wl2} and \eqref{a0eq}).
In order to obtain further geometric information, we consider the {\it heat trace asymptotics}. While such an approach is quite standard in spectral geometry (see \cite{g} and references therein), to our knowledge, it has not previously been systematically applied in the context of the Steklov problem.

As follows from  the results of \cite{DG, Ag, GrSe}, the trace of the associated heat kernel, $e^{-t\D}$, admits an asymptotic expansion 
\begin{equation}
\label{heatexp}
\sum_{i=0}^{\infty}e^{-t\sigma_i}=\Tr  e^{-t\D}=\int_M e^{-t\D}(x,x)\ dx\sim\sum_{k=0}^{\infty}a_kt^{-n+1+k}+\sum_{l=1}^{\infty}b_lt^l\log t.
\end{equation}
The coefficients $a_k$ and $b_l$ are called the Steklov heat invariants, and it follows from (\ref{heatexp}) that they are determined by the Steklov spectrum. The invariants $a_0,\ldots,a_{n-1}$, as well as $b_l$ for all $l$, are local, in the sense that they are integrals over $M$ of corresponding functions $a_k(x)$ and $b_l(x)$ which may be computed directly from the symbol of $\D$. However, $a_k$ is not local for $k\geq n$ \cite{g,gg}. One of the main goals of this paper is to investigate the local heat invariants $a_0(x)$, $\ldots$, $a_{n-1}(x)$. In particular, we discuss their general form and also compute $a_0$, $a_1$, and $a_2$ explicitly.

\subsection{Structure of the heat invariants}
We will prove that the Steklov heat invariants $a_k(x)$ must consist of explicit polynomials in the metric, its inverse, and their derivatives in both tangential directions along the boundary and directions normal to the boundary at $x$. For any term in such a polynomial, we let the \emph{(total) weight} be the total number of derivatives in that term and let the \emph{normal weight} be the total number of normal derivatives in that term; for example,  a term consisting of a tangential derivative of one metric element times a normal derivative of another metric element has weight 2 and normal weight 1. Note that by a Taylor series argument (see Section 2.1), all derivatives of the inverse metric at $x$ may be expressed as polynomials in the derivatives of the metric itself at $x$, with the same weight and the same normal weight as the original term. Moreover, we say that a polynomial in the metric, its inverse, and their derivatives has total weight $k$ if each term has total weight $k$. We also say that such a polynomial has \emph{normal parity} 1 if each term has odd normal weight, and normal parity 0 if each term has even normal weight. The following result is proved in section \ref{sec:structure}:
\begin{theorem}\label{structure} For each $k$ with $0\leq k\leq n-1$, the pointwise Steklov heat invariant $a_k(x)$ is a universal polynomial in the metric and its derivatives, of total weight $k$ and normal parity equal to $k$ mod 2.
\end{theorem}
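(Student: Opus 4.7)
The plan is to combine a pseudodifferential computation of the symbol of $\D$ with an invariance argument. First I would establish the polynomial structure of $a_k(x)$ using the Seeley heat-kernel calculus; then the total weight claim would follow from a scaling argument, and the normal parity claim from the invariance of $a_k(x)$ as a scalar function on $M$.

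Fix a point $x_0 \in M$ and work in boundary normal (Fermi) coordinates $(x',r)$ near $x_0$, in which $g = dr^2 + h_{ij}(x',r)\,dx^i dx^j$ and the Laplacian takes the form $\Delta = -\partial_r^2 - H(x',r)\,\partial_r + \Delta_{h(\cdot,r)}$, with $\Delta_{h(\cdot,r)}$ a tangential Laplacian depending smoothly on $r$. Following the standard factorization approach to the Dirichlet-to-Neumann operator, I would seek $-\Delta = (\partial_r + B)(\partial_r - A)$ modulo a smoothing operator, where $A$ and $B$ are $r$-dependent pseudodifferential operators on $M$; then $\D = A|_{r=0}$. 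Expanding $A \sim a_1 + a_0 + a_{-1} + \cdots$ into homogeneous symbols of decreasing order reduces the factorization problem to a Riccati-type recursion whose $j$-th iterate expresses $a_{1-j}$ as an explicit polynomial in $\xi'$ and in the components of $h_{ij}$, $h^{ij}$ and their tangential and normal derivatives evaluated at $r=0$. A Taylor expansion converts every derivative of $h^{-1}$ into a polynomial in derivatives of $h$ of identical total and normal weight, so the symbol components become polynomials in the metric and its derivatives alone.

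Next, Seeley's construction of the heat kernel (cf.\ \cite{GrSe}) represents $a_k(x)$ as a finite sum of integrals over $\xi' \in \mathbb{R}^{n-1}$ of polynomial combinations of the $a_{1-j}$'s and their $\xi'$-derivatives, weighted by a resolvent contour factor; performing the $\xi'$-integration preserves the polynomial-in-metric structure (indeed, odd monomials in $\xi'$ vanish by symmetry, and even monomials produce symmetric polynomials in $h^{ij}$). Thus each $a_k(x)$ is a universal polynomial in the metric and its derivatives at $x_0$. For the total weight claim, observe that $\D_{\lambda^2 g} = \lambda^{-1}\D_g$, so under the combined rescaling $g \mapsto \lambda^2 g$ and $x \mapsto \lambda x$ (which preserves the coordinate form of $h_{ij}$) the pointwise invariant $a_k(x)$ scales as $\lambda^{-k}$, while a monomial of total weight $w$ scales as $\lambda^{-w}$; matching forces $w = k$.

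Finally, for the normal parity claim I would use the invariance of $a_k(x)$ under the $O(n-1)$ action on tangential orthonormal frames at $x_0$: for such an invariance to hold, all free tangential tensor indices in each monomial must pair up through contractions. Each metric factor $h^{ij}$ or $h_{ij}$ contributes two tangential indices (an even number), while a factor $\partial_r^m \partial_{c_1}\cdots\partial_{c_t}h_{ab}$ contributes $2+t$ tangential indices; hence the parity of the total number of tangential indices in a monomial equals the parity of $\sum_i t_i$, i.e.\ of the number of tangential derivatives. Requiring this to be even yields normal weight $\equiv k \pmod 2$, as claimed. The main obstacle is the bookkeeping in the symbol recursion: one must check carefully that the recursion produces genuine polynomials in the metric and its derivatives (rather than more general smooth functions thereof) and that this property survives the $\xi'$-integration. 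Once this structural fact is in hand, the scaling and index-pairing arguments for total weight and normal parity are short.
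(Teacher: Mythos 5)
Your proposal is correct in outline and, for the first step---polynomiality of $a_k(x)$ via the Lee--Uhlmann factorization of the Laplacian in boundary normal coordinates followed by the Seeley parametrix and contour integral---it coincides with the paper's approach; the bookkeeping you flag as ``the main obstacle'' is precisely the content of the paper's two inductive lemmas on the symbols $\hat r_{-m}$ and $s_{-m}$. Where you genuinely diverge is in how the two quantitative claims are extracted. The paper gets both the total weight and the normal parity out of that same induction: for every term of the form $|\xi|^{-k}p(x',\xi)$ it tracks three quantities (total weight, normal parity, and parity in $\xi$), shows that the recursions \eqref{recursive} and \eqref{recursives} preserve the relation ``their sum is $0$ mod $2$,'' and then observes that terms odd in $\xi$ integrate to zero over $T^*_xM$, so every surviving term has $\xi$-parity $0$ and hence normal parity $\equiv$ total weight $\equiv k$ mod $2$. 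You instead derive the weight from the rescaling $g\mapsto\lambda^2 g$, $x\mapsto\lambda x$ (sound: $\D_{\lambda^2 g}=\lambda^{-1}\D_g$, so after accounting for the factor $\lambda^{n-1}$ from the volume form the pointwise invariant scales as $\lambda^{-k}$) and the parity from $O(n-1)$-invariance with index pairing. Both substitutes work, but be aware that the index-pairing step does not apply monomial-by-monomial to the raw output of the Seeley calculus: you must first average over $O(n-1)$ to obtain a polynomial invariant on the full jet space (not merely on the subvariety of jets of actual metrics in the specialized coordinates) and then invoke Weyl's first fundamental theorem to rewrite it as a sum of complete contractions, in which normal form each monomial has an even number of tangential indices and hence an even number of tangential derivatives. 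That is the standard Atiyah--Bott--Patodi/Gilkey invariance-theory machinery; it buys conceptual clarity and portability (your scaling and invariance arguments apply verbatim to any local spectral invariant), whereas the paper's $\xi$-parity bookkeeping buys a self-contained elementary proof at the price of a more detailed induction.
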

\subsection{Explicit expressions}
We now give expressions for the first three Steklov heat invariants. Let $H_1$ be the mean curvature, and let $H_2$ be the \emph{second order mean curvature}, given in terms of the principal curvatures $\lambda_1$, $\ldots$, $\lambda_n$ by
\[H_2=\frac{1}{(n-1)(n-2)}\sum_{\alpha\neq\beta}\lambda_{\alpha}\lambda_{\beta}.\]
Further, let $R_{\Omega}$ be the scalar curvature of the domain $\Omega$ and $R_M$ be the scalar curvature of the boundary $M$. Finally, let 
$$
V_n=\Vol(\mathbb{S}^{n-2})=\frac{2\pi^{\frac{n-1}{2}}}{\Gamma(\frac{n-1}{2})}.
$$
\begin{theorem}\label{explicit} The first three pointwise heat invariants $a_k(x)$ of the Steklov spectrum of an $n$--dimensional Riemannian manifold $\Omega$ with boundary $M=\partial \Omega$ are given by the formulas
\begin{equation}\label{zeroinvar}a_0(x)=(2\pi)^{-n+1}V_n\Gamma(n-1), \,\,\,\, n\ge 1; \end{equation}
\smallskip
\begin{equation}\label{firstinvar}a_1(x)=\frac{V_n(n-2)\Gamma(n-1)}{2(2\pi)^{n-1}}H_1, \,\,\,\, n \ge 2; \end{equation}
\smallskip
\begin{multline}
\label{seconinvar}
a_{2}(x)=\frac{V_n\Gamma(n-2)}{8(2\pi)^{n-1}}\left[\frac{(n-1)(n-2)(n^2-n-4)}{n+1}H_1^2
\,-\frac{n(n-3)(n-2)}{n+1}H_2\,\right.\\ \left.+\frac{n-2}{n-1}R_{\Omega}-\frac{n-4}{3(n-1)}R_M\right],  \,\,\,\, n \ge 3.
\end{multline}
\end{theorem}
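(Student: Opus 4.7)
The plan is to apply the Seeley calculus for the first-order positive elliptic pseudodifferential operator $\D$ on the closed $(n-1)$-manifold $M$ and read off the first three heat invariants explicitly. The Seeley formula expresses the heat kernel on the diagonal as
\[
e^{-t\D}(x,x) = \frac{1}{(2\pi)^{n-1}}\int_{\mathbb{R}^{n-1}} \frac{1}{2\pi i}\int_\Gamma e^{-t\lambda}\sigma((\D-\lambda)^{-1})(x,\xi')\,d\lambda\,d\xi',
\]
so the task splits into (a) computing the full symbol of $\D$ to sufficient order, and (b) performing the resulting $\lambda$- and $\xi'$-integrals.

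For (a), I would work in boundary normal coordinates $(r,x')$ near $x\in M$, in which the ambient metric takes the form $dr^2 + g_{ij}(r,x')dx'^i dx'^j$ with $M = \{r=0\}$. Writing the harmonic extension of $f\in C^\infty(M)$ formally as $u(r,x') = e^{-rA}f$ for a first-order $\psi$DO $A = A_1 + A_0 + A_{-1} + \cdots$ on $M$, one has $\D f = A(0)f$ up to sign. Substituting this ansatz into $\Delta u = 0$ and Taylor-expanding $g^{ij}(r,\cdot)$ in $r$ produces a triangular recursion for the homogeneous components $A_j$. The principal symbol is $A_1(x',\xi') = |\xi'|_{g(0)}$; the subprincipal symbol $A_0$ involves the mean curvature $H_1$ coming from the $H\partial_r$ term in $\Delta$; and $A_{-1}$ involves the second fundamental form squared, the second normal derivatives of $g_{ij}$, and tangential derivatives. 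Restricting to $r=0$ gives $\sigma(\D)$ to the required order.

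For (b), I would expand $\sigma((\D-\lambda)^{-1})$ by the standard resolvent recursion and evaluate the $\lambda$-integrals via $\frac{1}{2\pi i}\int_\Gamma e^{-t\lambda}(A_1-\lambda)^{-k}d\lambda = \frac{t^{k-1}}{(k-1)!}e^{-tA_1}$. Choose coordinates at $x$ with $g_{ij}(0,x)=\delta_{ij}$ in which the shape operator is diagonal with principal curvatures $\lambda_1,\ldots,\lambda_{n-1}$. In polar coordinates the remaining $\xi'$-integrals reduce to gamma-function integrals times sphere integrals such as
\[
\int_{\mathbb{S}^{n-2}}\omega^i\omega^j\,d\omega = \frac{V_n}{n-1}\delta^{ij},\quad \int_{\mathbb{S}^{n-2}}\omega^i\omega^j\omega^k\omega^\ell\,d\omega = \frac{V_n}{(n-1)(n+1)}\bigl(\delta^{ij}\delta^{k\ell}+\delta^{ik}\delta^{j\ell}+\delta^{i\ell}\delta^{jk}\bigr).
\]
For $a_0$ only $\int_{\mathbb{R}^{n-1}}e^{-t|\xi|}\,d\xi = V_n\Gamma(n-1)t^{-(n-1)}$ is needed, giving \eqref{zeroinvar}. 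For $a_1$ and $a_2$, the tensor contractions are rewritten using $(\sum\lambda_\alpha)^2 = (n-1)^2 H_1^2$ and $\sum_\alpha \lambda_\alpha^2 = (n-1)^2 H_1^2 - (n-1)(n-2)H_2$, the Gauss equation relating $R_\Omega$ to $R_M$ and the second fundamental form, and the identity expressing $\partial_r^2 g_{ij}(0,x)$ in terms of the ambient Ricci curvature in the normal direction and the shape operator squared. This produces \eqref{firstinvar} and \eqref{seconinvar}.

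The main obstacle is the explicit calculation of $A_{-1}$ and the subsequent bookkeeping needed to combine its contribution to $a_2$ with those coming from $A_0^2$, the symbol product terms of the form $A_0\,\partial_{\xi'_i}A_1$, and so on. Theorem \ref{structure} reduces this to fixing the coefficients of a short list of tensor monomials of total weight $2$ and even normal parity, both constraining the final form of the answer and offering a useful consistency check by comparison with model domains (for example half-spaces, cylinders, or balls) where $\D$ and its heat trace can be computed independently.
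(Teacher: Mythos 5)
Your proposal is correct and follows essentially the same route as the paper: a Riccati-type recursion for the full symbol of $\D$ in boundary normal coordinates (the exponential ansatz $u=e^{-rA}f$ is the operator form of the Lee--Uhlmann factorization the paper uses), fed into Seeley's parametrix construction, with the contour integrals, spherical moment integrals, the identities $\bigl(\sum_\alpha\lambda_\alpha\bigr)^2=(n-1)^2H_1^2$ and $\sum_\alpha\lambda_\alpha^2=(n-1)^2H_1^2-(n-1)(n-2)H_2$, and the Gauss-type identity for $\partial_r^2 g_{ij}$ playing exactly the roles of the paper's Lemmas \ref{contour}, \ref{compute}, and \ref{scalcurvform} and the integrals of section \ref{auxint}. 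The only thing separating your outline from the paper's proof is the execution of the order $-1$ symbol computation and the ensuing bookkeeping, which you correctly identify as the main labor.
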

\begin{remark} Edward and Wu have shown in \cite{ew} that $a_1=0$ for any simply connected domain $\Omega$ in $\mathbb R^2$; this agrees with our result, which indicates that in fact the pointwise heat invariant $a_1(x)=0$ whenever $n=2$.
\end{remark}
Theorem \ref{explicit} is proved in sections \ref{41} and \ref{42}. 
The expression for $a_2(x)$ simplifies significantly if $\Omega$ has constant sectional curvature $K$. 
In this case, $R_{\Omega}=n(n-1)K$ and $R_M=(n-1)(n-2)(K+H_2)$ (see \cite{alm}). We therefore have
\begin{corollary} When $\Omega$ has constant sectional curvature $K$,
\begin{multline}a_{2}(x)=\frac{V_n\Gamma(n-2)}{8(2\pi)^{n-1}}\left[\frac{(n-1)(n-2)(n^2-n-4)}{n+1}H_1^2-\frac{4(n^2-3n-1)}{3(n^2-1)}R_M\,\right.\\ \left.+\frac{2n(n-1)(n-2)}{n+1}K\right].\end{multline}
In particular, when $n=3$ and $\Omega \subset \mathbb R^3$,
\begin{equation}
\label{3d}
a_2(x)=\frac{1}{16\pi}\left(H_1^2+\frac{R_M}{6}\right).
\end{equation}
\end{corollary}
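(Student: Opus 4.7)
The corollary is a direct algebraic consequence of Theorem \ref{explicit}; no new analytic input is required. The plan is simply to substitute the two identities stated immediately before the corollary --- namely $R_\Omega = n(n-1)K$ and $R_M = (n-1)(n-2)(K+H_2)$ --- into the formula \eqref{seconinvar} for $a_2(x)$, and to simplify the resulting expression.

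First I would solve the second identity for $H_2$, obtaining
\[H_2 \;=\; \frac{R_M}{(n-1)(n-2)} - K,\]
and then plug this together with $R_\Omega = n(n-1)K$ into the bracketed term of \eqref{seconinvar}. The $H_1^2$ term is unaffected. The contribution from $-\frac{n(n-3)(n-2)}{n+1}H_2$ splits into a multiple of $R_M$ with coefficient $-\frac{n(n-3)}{(n-1)(n+1)}$ and a multiple of $K$ with coefficient $+\frac{n(n-3)(n-2)}{n+1}$, while the contribution from $\frac{n-2}{n-1}R_\Omega$ becomes $n(n-2)K$. Collecting the $R_M$ coefficients over the common denominator $3(n^2-1)$ yields $-\frac{4(n^2-3n-1)}{3(n^2-1)}$, and collecting the $K$ coefficients yields $\frac{n(n-2)\bigl[(n+1)+(n-3)\bigr]}{n+1}=\frac{2n(n-1)(n-2)}{n+1}$, reproducing the claimed formula.

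For the special case $n=3$ with $\Omega\subset\mathbb{R}^3$ I would set $K=0$ and substitute $n=3$ into the general expression just obtained. Using $V_3=2\pi$ and $\Gamma(1)=1$, the prefactor $\frac{V_n\Gamma(n-2)}{8(2\pi)^{n-1}}$ collapses to $\frac{1}{16\pi}$; the $H_1^2$ coefficient becomes $\frac{2\cdot 1\cdot 2}{4}=1$; and the $R_M$ coefficient becomes $-\frac{4\cdot(-1)}{3\cdot 8}=\frac{1}{6}$, which is exactly \eqref{3d}.

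The only real obstacle is bookkeeping: making sure that the two $R_M$ contributions are combined correctly over the common denominator $3(n^2-1)$, and similarly for the two $K$ contributions. Beyond this, the derivation is entirely mechanical and needs no tools beyond Theorem \ref{explicit} together with the two stated constant-curvature identities.
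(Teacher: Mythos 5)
Your proposal is correct and is precisely the paper's own argument: the corollary is obtained by substituting $R_\Omega=n(n-1)K$ and $H_2=\frac{R_M}{(n-1)(n-2)}-K$ into \eqref{seconinvar} and collecting the $R_M$ and $K$ coefficients, and your algebra (including the $n=3$ specialization with $V_3=2\pi$) checks out.
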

Apart from Euclidean domains ($K=0$), examples covered by the corollary include domains in spheres ($K>0$) and in hyperbolic spaces ($K<0$).
\subsection{Applications to spectral rigidity} The coefficients $a_k$, $k=0,\dots,n-1$, in the heat trace expansion \eqref{heatexp} are given by
$$
a_k=\int_M a_k(x).
$$
As was mentioned in section \ref{sec:heatinv}, it follows from Weyl's law that the volume of $M$ is determined by the Steklov spectrum. Formula \eqref{zeroinvar} 
provides an alternative proof of this result, as 
\begin{equation}
\label{a0eq}
a_0=\frac{\Gamma(n-1)}{2^{n-2} \pi^{\frac{n-1}{2}}\Gamma(\frac{n-1}{2})} \Vol(M).
\end{equation}
For $n=2$, the first heat invariant $a_1=0$, but when $n\geq 3$, $a_1$ is a nonzero multiple of the total mean curvature. As a consequence, we obtain
\begin{proposition} When $n\geq 3$, the total mean curvature $\int_M H_1$ of the boundary $M=\partial \Omega$ is determined by the Steklov spectrum.
\end{proposition}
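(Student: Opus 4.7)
The plan is short, since the proposition is essentially a direct corollary of Theorem \ref{explicit}. My first step is to observe that each coefficient $a_k$ in the heat trace expansion \eqref{heatexp} is itself a spectral invariant. Indeed, the heat trace $t \mapsto \sum_i e^{-t\sigma_i}$ is manifestly determined by the Steklov spectrum, and the functions $\{t^{-n+1+k}\}_{k \geq 0} \cup \{t^l \log t\}_{l \geq 1}$ are linearly independent elements of a well-ordered asymptotic scale as $t \to 0^+$. Hence the coefficients appearing in \eqref{heatexp} are uniquely determined by the heat trace, and in particular $a_1$ can be read off from the Steklov spectrum.

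Next I would simply integrate formula \eqref{firstinvar} of Theorem \ref{explicit} over $M$ to obtain
\[a_1 \;=\; \int_M a_1(x)\, dx \;=\; \frac{V_n\,(n-2)\,\Gamma(n-1)}{2(2\pi)^{n-1}}\int_M H_1\, dx.\]
For $n \geq 3$ the prefactor is a strictly positive constant depending only on the dimension, since each of $V_n$, $n-2$, and $\Gamma(n-1)$ is positive. Dividing by this prefactor expresses $\int_M H_1$ as an explicit scalar multiple of $a_1$, and therefore as a quantity determined by the Steklov spectrum.

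There is no real obstacle here: once Theorem \ref{explicit} is in hand, the only point worth checking is that the coefficient in \eqref{firstinvar} is nonzero for $n \geq 3$, which is immediate. Note that the argument breaks down precisely at $n=2$, where the factor $n-2$ forces $a_1$ to lose all information about $H_1$; this is entirely consistent with the remark following Theorem \ref{explicit} and with the result of Edward and Wu cited there.
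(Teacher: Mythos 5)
Your argument is correct and is essentially the paper's own: the coefficients in the heat trace expansion \eqref{heatexp} are determined by the Steklov spectrum, and by \eqref{firstinvar} the invariant $a_1$ is a nonzero dimensional constant times $\int_M H_1$ when $n\geq 3$. Your added remarks on the uniqueness of asymptotic coefficients and the degeneration at $n=2$ are consistent with what the paper states but do not change the route.
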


In sections \ref{51} and \ref{52}, we prove the following spectral rigidity result: 
\begin{theorem}
\label{corollary:main}
 Let $\Omega\subset\mathbb R^3$ be a domain with connected and smooth boundary $M$. Suppose its Steklov spectrum is equal to that of $B_\rho$, a ball of radius $\rho$. Then $\Omega=B_\rho$. \end{theorem}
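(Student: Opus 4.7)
The plan is to extract three geometric integrals for $M=\partial\Omega$ from the first three heat invariants and combine them with Gauss--Bonnet, Cauchy--Schwarz, and Alexandrov's rigidity theorem.

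First I would compute $a_0,a_1,a_2$ for $B_\rho$ directly from its known Steklov spectrum $\sigma_k=k/\rho$ with multiplicity $2k+1$. The generating function identity
\[\sum_{k\geq 0}(2k+1)e^{-tk/\rho}=\frac{1+e^{-t/\rho}}{(1-e^{-t/\rho})^2}=\frac{2\rho^2}{t^2}+\frac{\rho}{t}+\frac{1}{3}+O(t)\]
yields $a_0^{B_\rho}=2\rho^2$, $a_1^{B_\rho}=\rho$, $a_2^{B_\rho}=\tfrac{1}{3}$ (consistent with plugging the round-sphere geometry into Theorem \ref{explicit}). If $\Omega\subset\mathbb{R}^3$ is Steklov-isospectral to $B_\rho$, matching these invariants against the formulas of Theorem \ref{explicit} (with $R_\Omega=0$) produces the three integral constraints
\[\Vol(M)=4\pi\rho^2,\qquad \int_M H_1=4\pi\rho,\qquad \int_M\!\bigl(H_1^2+\tfrac{1}{6}R_M\bigr)=\tfrac{16\pi}{3}.\]
Using $R_M=2K_M$ and $\int_M K_M=2\pi\chi(M)$ (Gauss--Bonnet), the third constraint rewrites as
\[\int_M H_1^2=\tfrac{2\pi(8-\chi(M))}{3}.\]

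Next, Cauchy--Schwarz gives $(\int_M H_1)^2\leq \Vol(M)\int_M H_1^2$, hence $\int_M H_1^2\geq 4\pi$ with equality if and only if $H_1$ is constant on $M$. Combined with the identity for $\int_M H_1^2$ this forces $\chi(M)\leq 2$, an automatic bound on a closed surface. In the equality case $\chi(M)=2$, Cauchy--Schwarz saturates, so $H_1\equiv 1/\rho$ is constant on $M$; Alexandrov's uniqueness theorem then identifies $M$ with a round sphere of radius $\rho$, and hence $\Omega=B_\rho$.

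The genuine obstacle is excluding higher genera. For each $g\geq 1$, the heat invariant identity forces $\int_M H_1^2=4\pi(3+g)/3$. For $g=1$ this gives $16\pi/3$, strictly below the Marques--Neves (formerly Willmore) lower bound $2\pi^2$ for any embedded torus in $\mathbb{R}^3$, producing the desired contradiction. For $g\geq 2$ a comparably sharp Willmore-type lower bound is not directly available---by Kuwert--Schätzle the Willmore infimum over embedded surfaces of any genus stays below $8\pi$---so some additional ingredient is needed to rule these cases out. Natural candidates are the logarithmic heat coefficients $b_l$ from \eqref{heatexp}, further heat invariants, or a refined integral-geometric inequality relating $\Vol(M)$, $\int_M H_1$, and $\int_M H_1^2$ for embedded surfaces in $\mathbb{R}^3$.
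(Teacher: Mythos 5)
Your second half---extracting $\Vol(M)=4\pi\rho^2$, $\int_M H_1=4\pi\rho$, and $\int_M(H_1^2+\tfrac16 R_M)=\tfrac{16\pi}{3}$ from $a_0,a_1,a_2$, converting the last via Gauss--Bonnet, saturating Cauchy--Schwarz, and invoking Alexandrov---is exactly the paper's Section 5.2, and that part is correct. The genuine gap is the one you flag yourself: you cannot establish $\chi(M)=2$ from the heat invariants alone. Your identity $\int_M H_1^2=\tfrac{4\pi(3+g)}{3}$ combined with Marques--Neves disposes of $g=1$ (since $\tfrac{16\pi}{3}<2\pi^2$), but for $g=2$ the required value $\tfrac{20\pi}{3}\approx 20.94$ already exceeds $2\pi^2\approx 19.74$, and since the Willmore infimum $\beta_g$ stays below $8\pi$ for every genus, no Willmore-type bound will exclude $g\geq 2$. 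An argument that uses only $a_0,a_1,a_2$ therefore cannot close.

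The paper's missing ingredient is spectral data beyond the heat trace coefficients: the \emph{multiplicity structure} of the spectrum. Since the distinct Steklov eigenvalues of $B_\rho$ are $k/\rho$ with multiplicities $2k+1$ growing linearly, any isospectral $\Omega$ satisfies the hypothesis $m_k=ak^{n-2}+\mathcal O(k^{n-3})$ of Proposition \ref{prop:zelditch}, an adaptation of Zelditch's theorem for the Laplacian to the Dirichlet-to-Neumann operator (the adaptation requires handling the nonvanishing subprincipal symbol $r_0$ in the two-term Weyl law). The conclusion is that $M$ is Zoll, and connected Zoll surfaces embedded in $\mathbb R^3$ are topological spheres, which is precisely the statement $\chi(M)=2$ that your argument needs before the Cauchy--Schwarz step can be run. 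If you want to complete your proof, this is the step to supply; the logarithmic coefficients $b_l$ will not help, since they vanish in this setting to the orders computed.
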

The proof of the theorem is split into two parts. First,  we use eigenvalue and multiplicity asymptotics to show that $M$ is  simply connected; this step is based on an argument of Zelditch \cite{z}. We then use the heat invariants computed above to show that $M$ is a sphere.
\begin{remark}
\label{Weinstock}
An analogue of Theorem \ref{corollary:main} in two dimensions is well--known. Indeed, it was shown by Weinstock (\cite{We}, see also \cite{GP1}) that among simply connected planar domains, of a given perimeter,  the first Steklov eigenvalue attains its unique maximum on a disk. Since perimeter is spectrally determined, this implies that a disk is uniquely determined by its Steklov spectrum among all simply connected planar domains. An alternative proof of this result could be found in \cite{e}.
\end{remark}

It is unknown whether a ball maximizes the first Steklov eigenvalue in higher dimensions among all domains of fixed volume of the boundary; therefore, the above argument can not be used in higher dimensions.  There exist generalizations of Weinstock's inequality due to  \cite{Brock} and \cite{BS},
but they involve $\Vol(\Omega)$, and it is not known whether it is an invariant of the Steklov spectrum.

We conclude this section with the following conjecture, motivated by 
Theorem \ref{corollary:main} and Remark \ref{Weinstock}.
\begin{conj}
A ball in $\mathbb{R}^n$ is uniquely determined by its Steklov spectrum among all $n$-dimensional Euclidean domains.
\end{conj}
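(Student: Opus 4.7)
The plan is to use the explicit formula for $a_2$ from Theorem \ref{explicit} together with the fact that $\Omega\subset\mathbb{R}^n$ forces $R_\Omega\equiv 0$ and, via the Gauss equation, $R_M=(n-1)(n-2)H_2$. Substituting into \eqref{seconinvar} gives
\begin{equation*}
a_2 = C_n\int_M\bigl[\alpha(n)\,H_1^2 + \beta(n)\,H_2\bigr]
\end{equation*}
with $C_n>0$ and $\alpha,\beta$ explicit rational functions of $n$. A direct polynomial check suggests $\alpha+\beta>0$ and $-\beta>0$ for all $n\geq 4$. Decomposing
\begin{equation*}
\alpha H_1^2 + \beta H_2 = (\alpha+\beta)H_1^2 + (-\beta)(H_1^2-H_2),
\end{equation*}
and invoking (i) Newton's inequality $H_1^2\geq H_2$ pointwise, saturated only at umbilical points, and (ii) the Cauchy--Schwarz bound $\int_M H_1^2 \geq (\int_M H_1)^2/\Vol(M)$, saturated only when $H_1$ is constant, we would obtain
\begin{equation*}
a_2 \;\geq\; C_n(\alpha+\beta)\,\frac{\bigl(\int_M H_1\bigr)^2}{\Vol(M)},
\end{equation*}
with equality if and only if $M$ is totally umbilical \emph{and} $H_1$ is constant. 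For the ball $B_\rho$ both conditions hold, so the bound is sharp there.

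If $\Omega$ is Steklov-isospectral to $B_\rho$, then by \eqref{a0eq} and \eqref{firstinvar} the quantities $\Vol(M)$ and $\int_M H_1$ match those of $B_\rho$, and $a_2$ does too, so the inequality above is saturated for $\Omega$; in particular $M$ is totally umbilical. By the classical rigidity theorem, every connected closed totally umbilical hypersurface in $\mathbb{R}^n$ with $n\geq 3$ is a round sphere. If $M$ has components of radii $r_1,\dots,r_k$, set $a_i=r_i/\rho$; matching $\Vol(M)$ and $\int_M H_1$ with $B_\rho$ yields $\sum a_i^{n-1}=1$ and $\sum a_i^{n-2}=1$, so $\sum a_i^{n-2}(1-a_i)=0$ with each $a_i\in[0,1]$. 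Every summand is non-negative, so each $a_i\in\{0,1\}$, forcing exactly one non-trivial component, of radius $\rho$. Hence $\Omega=B_\rho$. Together with Weinstock's inequality (Remark \ref{Weinstock}) for $n=2$ and Theorem \ref{corollary:main} for $n=3$, this would settle the conjecture in every dimension.

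The main anticipated obstacle is the positivity of $\alpha+\beta$ and $-\beta$: these are explicit polynomial expressions in $n$ whose signs are straightforward to verify for each $n\geq 4$, but a clean uniform proof requires a factoring or monotonicity argument. If the positivity were to fail for some small $n$, the above decomposition would collapse and one would need to bring in $a_3$ (or the log-term coefficients $b_l$) to extract a further independent combination of $\int_M H_1^2$ and $\int_M H_2$; the computation of $a_3$ by the Seeley-calculus method of the paper is feasible but substantial. A second, subtler issue is the smoothness assumption on $\partial\Omega$: the conjecture is stated without regularity hypotheses, and a complete proof must either restrict to smooth $\Omega$ or extend the heat-invariant machinery to less regular boundaries. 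Note that, in contrast to the $n=3$ argument, no separate topological step is required, because the umbilical condition already constrains every component of $M$ to be a sphere.
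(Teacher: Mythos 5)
First, a point of orientation: the statement you were asked to prove is stated in the paper as a \emph{conjecture}, not a theorem. The authors prove only the case $n=3$ (Theorem \ref{corollary:main}), under the additional hypotheses that $\partial\Omega$ is smooth and connected, and they do so by a route quite different from yours: a Zelditch-type multiplicity argument showing $M$ is Zoll (hence a topological sphere), then Gauss--Bonnet to convert the $H_2$-term of $a_2$ into a topological constant, then Cauchy--Schwarz plus Alexandrov's constant-mean-curvature rigidity. That detour is forced on them precisely because your decomposition fails at $n=3$: there $-\beta=\frac{4(n-2)(n^2-3n-1)}{3(n+1)}=-\tfrac13<0$, so the Newton-inequality term enters with the wrong sign and the $H_2$-contribution must instead be absorbed topologically. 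For $n\geq 4$ your scheme is sound and the positivity you worried about is in fact clean and uniform: one computes $-\beta=\frac{4(n-2)(n^2-3n-1)}{3(n+1)}>0$ for $n\geq 4$, and $\alpha+\beta=\frac{(n-2)(3n^2-13n+16)}{3}$, where the quadratic factor has negative discriminant and is therefore always positive. So for $n\geq 4$ and smooth boundary, the chain $a_2\geq C_n(\alpha+\beta)\int_M H_1^2\geq C_n(\alpha+\beta)(\int_M H_1)^2/\Vol(M)$, saturated by the ball and forced to be saturated by isospectrality, does yield total umbilicity and constancy of $H_1$. This is a genuinely different, and in some ways more elementary, mechanism than the paper's: it trades the Zelditch/Gauss--Bonnet topology step for total umbilicity, which is stronger than constant mean curvature and makes Alexandrov's theorem unnecessary.

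There remain three concrete gaps. (i) Your radius-matching equations $\sum a_i^{n-1}=\sum a_i^{n-2}=1$ silently assume $H_1>0$ on every boundary component; with respect to the outward normal of $\Omega$, an \emph{inner} boundary sphere of a domain with holes has $H_1<0$, so the second equation should read $a_1^{n-2}-\sum_{i\geq 2}a_i^{n-2}=1$. The conclusion survives (indeed more easily: constancy of $H_1$ on all of $M$ already forbids components of opposite-sign mean curvature), but as written the step is wrong. (ii) The conjecture carries no regularity hypothesis, while every heat-invariant argument, yours and the paper's alike, requires $\partial\Omega$ smooth; so at best you prove the conjecture for smooth boundaries. (iii) Your fallback for low dimensions inherits hypotheses you do not acknowledge: Weinstock's theorem covers only \emph{simply connected} planar domains, and Theorem \ref{corollary:main} assumes connected boundary, so $n=2$ and $n=3$ are not fully settled by citation. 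With these caveats stated, your $n\geq 4$ argument is a substantive advance over what the paper claims, not a reproduction of it.
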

Note that the analogue of this conjecture holds for Dirichlet and Neumann eigenvalue problems. Indeed, 
the volumes of both the domain and its boundary  can be determined from either the Dirichlet or the Neumann spectrum (see \cite{g}). At the same time, the ball is a unique Euclidean domain for which the classical isoperimetric inequality turns into an equality. Alternatively, one could use the Faber--Krahn (respectively, Szeg\H{o}--Weinberger) inequality, stating that the ball is a unique minimizer (respectively, maximizer) of the first Dirichlet (respectively, first nonzero Neumann) eigenvalue among all Euclidean domains of given volume (see \cite{He}).

There are other situations in which similar rigidity theorems can be proved using heat invariants. For example, the round sphere is known to be determined by the spectrum of the Laplace-Beltrami operator in dimensions $n\leq 6$ \cite{tan}. Hassell and Zworski also use heat invariants to prove resonant rigidity for $\mathbb{S}^2$ in the setting of obstacle scattering in $\mathbb R^3$ \cite{HZ}.

\subsection{Plan of the paper}  In sections \ref{sec:coord} and \ref{sec:symbol} we compute the symbol of the Dirichlet-to-Neumann operator in boundary normal coordinates, following \cite{lu}.  Combining these results with the Seeley calculus, we obtain local expressions for the first $n-1$ heat invariants in 
section~\ref{sec:Seeley}.  This allows us to prove Theorem \ref{structure} in section \ref{sec:structure}. Some applications and extensions of this result are presented in section \ref{sec:applications}.  In particular, the  heat invariants of product--type manifolds are described in Corollary \ref{producttype}. Sections \ref{41}
and \ref{42} are devoted to the proof of Theorem \ref{explicit}.  In section \ref{sec:example} we verify these results by a direct calculation of the first few  heat invariants of Euclidean balls in dimensions three and four. 
Finding an explicit expression for the heat invariant $a_2(x)$ is the most technically involved part of the paper. The proofs of auxiliary lemmas used in this calculation are presented in  sections \ref{pcompute} and \ref{pscal}. Some integrals  appearing in the computations are given in section \ref{auxint}. Finally, in sections \ref{51} and \ref{52}, we prove Theorem \ref{corollary:main}.

\subsection*{Acknowledgments} The authors would like to thank P. Hislop and P. Perry for useful discussions, and the anonymous referee for helpful remarks.  Research of I.P. was supported in part by NSERC, FQRNT and the Canada Research Chairs Program. 
Research of D.S. was supported in part by the CRM-ISM postdoctoral fellowship.

\section{Seeley calculus for the Dirichlet-to-Neumann operator}
\subsection{Specialized coordinates}
\label{sec:coord}
To compute the first few terms in the symbol of $\mathcal D$, we follow the work of Lee and Uhlmann \cite{lu}. Since the heat invariants $a_k(x)$ are local, we are free to choose coordinates which are well-adapted to our analysis near a fixed point $P$ and then compute $a_k(P)$. We use Riemannian normal coordinates centered at $P$, with any choice of initial orthonormal frame, to write the metric on $M$ in coordinates as
\[\sum_{\alpha,\beta=1}^{n-1}g_{\alpha\beta}(x)dx^{\alpha}dx^{\beta}.\]
Here $P$ corresponds to the origin and $g_{\alpha\beta}(x)=\delta_{\alpha\beta}+\mathcal O(x^2)$, where $\delta_{\alpha\beta}$ is the Kronecker delta. Let the inverse metric be $g^{\alpha\beta}$. Notice that all first derivatives of the metric and its inverse vanish at $P$.

Following \cite{lu}, we now extend these coordinates to 'boundary normal coordinates' in a neighborhood of $M\subset\Omega$. For each $x\in M$ near $P$, let $x^n$ be the parameter along the geodesic starting at $x$ with initial direction given by the inward-pointing boundary normal. Then the coordinates $(x^{\alpha}|_{\alpha=1}^{n-1},x^n)$, which we write $x'=(x,x^n)$ (note that the notation differs slightly from that in \cite{lu}), are local coordinates on $\Omega$ in a neighborhood of $P$. Moreover, the definition of $x^n$ does not depend on the initial orthonormal frame. In these coordinates, the metric is precisely
\[g=\sum_{\alpha,\beta=1}^{n-1}g_{\alpha\beta}(x')dx^{\alpha}dx^{\beta}+(dx^n)^2.\] As in \cite{lu}, we use the Greek indices for coordinates along $M$ and Roman indices for coordinates in $\Omega$. Let the dual coordinates to $x^{\alpha}$ be $\xi^{\alpha}$, and let the squared volume element $\delta(x')=\det(g_{ij}(x'))=\det(g_{\alpha\beta}(x'))$.

Using these coordinates, we still have freedom to choose our initial orthonormal frame on $M$; in particular, we may choose a frame that diagonalizes the second fundamental form. First fix any orthonormal frame $X_1,\ldots, X_{n-1}$, and let the local normal be $X_n$ (which is independent of the choice of frame). Then, letting $\Gamma_{ij}^k$ be the Christoffel symbols for $g$, we may write the linear operator associated to the second fundamental form as
\[S(X_{\beta})=\sum_{\alpha=1}^{n-1}-\Gamma_{\beta n}^{\alpha}(P)X_{\alpha}.\]
Using the formula for the Christoffel symbols (see, e.g. \cite{dc}) together with the fact that $g(P)=\Id$, we have, writing $\partial_k g_{ij}$ as $g_{ij,k}$:
\[-\Gamma^{\alpha}_{\beta n}=-\frac{1}{2}(g_{n\alpha,\beta}+g_{\alpha\beta,n}-g_{\beta n,\alpha}).\]
However, since $g_{n\alpha}=0$ in a neighborhood of $P$ in $\Omega$, we have
\[-\Gamma^{\alpha}_{\beta n}=-\frac{1}{2}g_{\alpha\beta,n}.\]
So the $(\alpha,\beta)$ entry of the matrix of the second fundamental form at $P$ is just $-\frac{1}{2}g_{\alpha\beta,n}(P)$.
This is a symmetric matrix and hence orthogonally diagonalizable, so we may choose an initial orthonormal frame which diagonalizes the second fundamental form at $P$. In these coordinates, $g_{\alpha\beta,n}(P)=0$ whenever $\alpha\neq\beta$, and $g_{\alpha\alpha,n}(P)=-2\lambda_{\alpha}$, where $\lambda_{\alpha}$ is the corresponding principal curvature at $P$.

It will also prove helpful to have similar expressions for the inverse metric at $P$. Viewing the metric $g(x')$ as a function of $x^n$ with values in symmetric 2-tensors on $M$, we Taylor expand around $x^n=0$:
\[g(x')=\Id +A_1x^n+A_2(x^n)^2+\mathcal O((x^n)^3).\]
The inverse metric then must have the expansion
\[g^{-1}(x')=\Id-A_1x^n+(A_1^2-A_2)(x^n)^2+\mathcal O((x^n)^3).\]
We conclude that $g^{\alpha\beta,n}=-g_{\alpha\beta,n}$, and in particular that $g^{\alpha\beta,n}(P)=2\lambda_{\alpha}\delta_{\alpha\beta}$. Moreover, from the second order terms, we observe that \begin{equation}\label{secondorderexp}\sum_{\alpha}g^{\alpha\alpha,nn}(P)=\sum_{\alpha}8\lambda_{\alpha}^2-\sum_{\alpha}g_{\alpha\alpha,nn}(P).\end{equation}

\subsection{The symbol of the Dirichlet-Neumann operator}
\label{sec:symbol}
Letting $D_{x^j}=-i\partial_{x^j}$, we write the Laplacian on $\Omega$ as
\[D_{x^n}^2+iE(x')D_{x^n}+Q(x',D_{x}),\]
where
\[E(x')=-\frac{1}{2}\sum_{\alpha,\beta}g^{\alpha\beta}(x')\partial_{x^n}g_{\alpha\beta}(x'),\]
\[Q(x',D_{x})=\sum_{\alpha,\beta}g^{\alpha\beta}(x')D_{x^{\alpha}}D_{x^{\beta}}
-i\sum_{\alpha,\beta}(\frac{1}{2}g^{\alpha\beta}(x')\partial_{x^{\alpha}}\log\delta(x')
+\partial_{x^{\alpha}}g^{\alpha\beta}(x'))D_{x^{\beta}}.\]
In particular, we can read off the full symbol of $Q$, which we write $q_2(x',\xi)+q_1(x',\xi)$.

By observing that the Dirichlet-to-Neumann operator $\D$ satisfies a Riccati-type equation, Lee and Uhlmann compute its full symbol \cite{lu}. First, define the symbol of a pseudodifferential operator with parameter $x_n$, which we call $\hat{\D}$, recursively by the following formulas, corresponding to (1.7), (1.8), and (1.9) in \cite{lu}:
\begin{equation}\label{r1}\hat r_1=-\sqrt{q_2},\end{equation}
\begin{equation}\label{r0}\hat r_0=\frac{1}{2\sqrt{q_2}}\left(\sum_{\alpha}d_{\xi^\alpha}\sqrt{q_2}D_{x^\alpha}\sqrt{q_2}
-q_1-\partial_{x^n}\sqrt{q_2}+E\sqrt{q_2}\right),\end{equation}
and for $m\geq 0$,
\begin{equation}\label{recursive}\hat r_{-m-1}=\frac{1}{2\sqrt{q_2}}\left(\sum_{\substack{-m\leq j\leq 1\\ -m\leq k\leq 1\\ |K|=j+k+m}}\frac{1}{K!}\partial_{\xi}^K(\hat r_j)D_{x'}^K(\hat r_k)+\partial_{x^n}\hat r_{-m}-E\hat r_{-m}\right).\end{equation}
Let $\tilde{\D}$ be the restriction of $\hat{\D}$ to $x_n=0$. Then as in \cite{lu} the Dirichlet-to-Neumann operator $\D$ is equal, modulo smoothing operators, to $-\tilde{\D}$; the sign is chosen so that $\D$ has positive principal symbol. Write the symbol of $\D$ as $r_1+r_0+r_{-1}+b_{-2}$, where $b_{-2}$ has order $-2$; expressions for $r_1$, $r_0$, and $r_{-1}$ may be computed from (\ref{r1}), (\ref{r0}), and (\ref{recursive}).

\subsection{Seeley calculus and the heat invariants}
\label{sec:Seeley}
One may now apply the work of Seeley \cite{s} to compute the local expressions for the first $n-1$ heat invariants of $\D$. We follow the exposition of Gilkey and Grubb \cite{gg}. Namely, let $S(\lambda)$ be a parametrix for $\D-\lambda$: that is, a pseudodifferential operator of order $-1$ with parameter $\lambda$ for which $(\D-\lambda)S(\lambda)$ and $S(\lambda)(\D-\lambda)$ are both pseudodifferential operators of order $-\infty$. Such a pseudodifferential operator must have symbol $s_{-1}(x,\xi,\lambda)+s_{-2}(x,\xi,\lambda)+\ldots$ given by:
\begin{equation}\label{sminusone}s_{-1}(x,\xi,\lambda)=\frac{1}{r_1-\lambda},\end{equation}
\begin{equation}\label{recursives}s_{-1-m}(x,\xi,\lambda)=-\frac{1}{r_1-\lambda}\left(\sum_{\substack{-m\leq j\leq -1\\ -m\leq k\leq 1\\ |K|=m+k+j\geq 0}}\frac{(-i)^{|K|}}{K!}\partial_{\xi}^Kr_k\partial_{x}^Ks_j\right).\end{equation}
For later reference, we write out the formulas for $s_{-2}$ and $s_{-3}$:
\begin{equation}\label{sminustwo}s_{-2}=-r_0(r_1-\lambda)^{-2}-i(\partial_{\xi}r_1\cdot\partial_{x}r_1)(r_1-\lambda)^{-3};\end{equation}
\begin{equation}s_{-3}=-(r_1-\lambda)^{-1}\left[r_0s_{-2}+r_{-1}s_{-1}-i(\partial_{\xi}r_1\cdot\partial_{x}s_{-2}-\partial_{\xi}r_0\cdot\partial_{x}s_{-1})\label{sminusthree}-\sum_{|K|=2}\frac{1}{K!}\partial_{\xi}^Kr_1\partial_{x}^Ks_{-1}\right].\end{equation}
The heat invariants may now be computed by using the functional calculus; see \cite{s} for the details. Letting $\Gamma$ be a contour around the positive real axis and letting $T^*_xM$ be the cotangent space at $x$, we have that for $0\leq k\leq n-1$,
\begin{equation}\label{heatinv}a_k(x)=\frac{i}{(2\pi)^{n}}\int_{T^*_{x}M}\int_{\Gamma}e^{-\lambda}s_{-1-k}(x,\xi,\lambda)\ d\lambda\ d\xi.\end{equation}

\section{General form of the heat invariants}
\subsection{Structure theorem}
\label{sec:structure}
Here we prove Theorem \ref{structure}. The proof proceeds by first analyzing the inductive formulas for the symbol of $\D$ and then passing to the inductive formulas for the heat invariants. In these formulas, we repeatedly encounter expressions of the form
\begin{equation}\label{termform} |\xi|^{-k}p(x',\xi),
\end{equation}
where $k\in\mathbb Z$, $|\xi|^2$ is shorthand for $g^{\alpha\beta}\xi^{\alpha}\xi^{\beta}$, and $p(x',\xi)$ is a polynomial in $\xi$ with coefficients equal to polynomials in the metric, its inverse, and their derivatives. We say that an expression of the form (\ref{termform}) has total weight $w$ if each term in $p(x',\xi)$ has total weight $w$. Additionally, we say that such an expression has total parity equal to 1 if each term in $p(x',\xi)$ has odd total weight, or equal to 0 if each term has even total weight; note that the total parity is not defined if there are terms of both odd and even total weight in $p(x',\xi)$. Analogous definitions may be formulated for normal weight and normal parity. Finally, we define the $\xi$-parity of (\ref{termform}) to be 1 if $p(x',\xi)$ is odd in $\xi$ and 0 if $p(x',\xi)$ is even in $\xi$. 

Note that each of these definitions is independent of a change of form in (\ref{termform}) - for instance, increasing $k$ by 2 and multiplying $p(x',\xi)$ by $|\xi|^2$ changes nothing.
Furthermore: suppose $\hat p(x',\xi)=|\xi|^{-k}p(x',\xi)$ has well-defined total weight, normal parity, and $\xi$-parity. Then it is an easy calculation to see that each of $\partial_{x^n}\hat p$, $\partial_{x^{\alpha}}\hat p$, and $\partial_{\xi^{\alpha}}\hat p$ may also be written in the form (\ref{termform}), and each has well-defined total weight, normal parity, and $\xi$-parity. All $x'$-derivatives increase the total weight by 1, the $x^n$-derivative also flips the normal parity, and any $\xi$-derivative flips the $\xi$-parity.

\begin{lemma} For each $m\geq -1$, $\hat r_{-m}(x')$ is a sum of terms of the form (\ref{termform}), each of which has total weight $m+1$. Moreover, the total parity, normal parity, and $\xi$-parity of each term are all well-defined, and their sum is equal to zero mod 2.
\end{lemma}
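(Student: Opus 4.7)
My plan is to prove this lemma by induction on $m$, jointly establishing all three assertions for $\hat r_{-m}$: that it decomposes as a finite sum of terms of the form \eqref{termform}; that each such term has total weight $m+1$; and that each satisfies (total parity) $+$ (normal parity) $+$ ($\xi$-parity) $\equiv 0 \pmod 2$. The base case $m = -1$ is immediate: $\hat r_1 = -\sqrt{q_2} = -|\xi| = |\xi|^{-(-1)}\cdot(-1)$ has total weight $0$ and all three parities equal to $0$. For $m=0$, I would verify the four summands of \eqref{r0} individually: each has total weight $1$, and for each the three parities sum to $0 \pmod 2$, as one can check directly; for instance, $-\partial_{x^n}\sqrt{q_2}/(2\sqrt{q_2})$ has parities $(1,1,0)$, while $d_{\xi^\alpha}\sqrt{q_2}\,D_{x^\alpha}\sqrt{q_2}/(2\sqrt{q_2})$ has parities $(1,0,1)$; the $q_1/(2\sqrt{q_2})$ and $E\sqrt{q_2}/(2\sqrt{q_2})$ terms are similar.

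For the inductive step, I would lean on the observations already recorded just above the lemma statement: each of $\partial_{x^\alpha}$, $\partial_{x^n}$, and $\partial_{\xi^\alpha}$ sends an expression of the form \eqref{termform} with well-defined parity triple to another such expression, increasing total weight by $1$ in the first two cases (and also flipping normal parity in the $\partial_{x^n}$ case) and flipping $\xi$-parity in the third. I would add two further observations needed here: multiplication by $|\xi|^{\pm 1}$ changes neither total weight nor any parity, and multiplication by $E$ contributes $(1,1,0)$ to the (weight, normal parity, $\xi$-parity) triple, so its parity-sum contribution is $0 \pmod 2$. Both are immediate from the definitions.

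Applying these to the recursion \eqref{recursive}, a typical summand $\partial_\xi^K(\hat r_j)\,D_x^K(\hat r_k)$, where $D_x^K$ denotes only tangential derivatives (since the composition formula for a family of tangential pseudodifferential operators involves only derivatives dual to $\xi$), has total weight $(1-j)+(1-k)+|K| = m+2$ by the inductive hypothesis; and the parity-sum changes from $\partial_\xi^K$ (adding $|K|$ to $\xi$-parity) and from $D_x^K$ (adding $|K|$ to total parity) cancel modulo $2$, while the parity-sum contributions of $\hat r_j$ and $\hat r_k$ are each $0$ by induction. The summands $\partial_{x^n}\hat r_{-m}$ and $-E\hat r_{-m}$ each add $2$ to the parity sum and each have total weight $m+2$. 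Dividing through by $2\sqrt{q_2}$ preserves the form and all parities, so $\hat r_{-m-1}$ inherits total weight $m+2$ and the parity condition, closing the induction. The main obstacle is bookkeeping rather than anything conceptual: one must verify that upon expanding the recursion, every individual monomial has a well-defined parity triple rather than a mixture, which amounts to checking that the elementary operations above act cleanly on individual $|\xi|^{-k}p(x',\xi)$ terms.
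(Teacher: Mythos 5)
Your proof is correct and follows essentially the same route as the paper's: induction on $m$ with the base cases $\hat r_1$ and $\hat r_0$ checked directly, then a term-by-term analysis of the recursion \eqref{recursive}, computing the total weight of $\partial_\xi^K(\hat r_j)D_x^K(\hat r_k)$ as $(1-j)+(1-k)+|K|=m+2$ and noting that the parity sum changes by $|K|+|K|\equiv 0 \pmod 2$, with the $\partial_{x^n}\hat r_{-m}$ and $E\hat r_{-m}$ terms handled by the same elementary observations. The only difference is that you spell out the base case and the behavior under multiplication by $|\xi|^{\pm1}$ and by $E$ slightly more explicitly than the paper does.
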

Note that since $\D=-\hat{\D}|_{x^n=0}$, an identical statement obviously holds for $r_j(x,\xi)$.
\begin{proof}
The proof is by induction on $m$; the lemma may be verified by explicit computation for $\hat r_1$ and $\hat r_0$. Now assume the inductive hypothesis holds for $\hat r_{-m}$, $\ldots$, $\hat r_1$, and use the equation (\ref{recursive}) to compute $\hat r_{-m-1}$ term by term. Multiplication by $E$ increases both the total and normal weight by 1 while leaving the $\xi$-parity unchanged, and differentiation in $x^n$ does the same. Therefore $|2 \xi|^{-1}(-E\hat r_{-m}+\partial_{x^n}\hat r_{-m})$ has the properties we claim. The only remaining term in the expression for $\hat r_{-m-1}$ is
\begin{equation}\label{hereisasum}\frac{1}{2|\xi|}\sum_{\substack{-m\leq j\leq 1\\ -m\leq k\leq 1\\ |K|=j+k+m}}\frac{1}{K!}\partial_{\xi}^K(\hat r_j)D_{x}^K(\hat r_k).\end{equation}
The total weight of each term is $(-j+1)+(-k+1)+K=m+2$, as required. Moreover, by the inductive hypothesis, the sum of the total weights, normal parities, and $\xi$-parities of each term in $\hat r_j$ and $\hat r_k$ is equal to zero mod 2 for all $j$ and $k$. Therefore, the sum of the total weights, normal parities, and $\xi$-parities of each term in (\ref{hereisasum}) is $K+K=0$ mod 2. This completes the proof of the lemma. \end{proof} 

The next step is to pass to the parametrix $S(\lambda)$. In this analysis, expressions of the form
\begin{equation}\label{termformtwo}(|\xi|-\lambda)^{-l}|\xi|^{-k}p(x,\xi)\end{equation} arise frequently; we define weights and parities of such expressions analogously to those for expressions of the form (\ref{termform}). Even though these expressions are restricted to $x^n=0$, they may involve normal derivatives of the metric, so normal weight and parity still make sense. A similar lemma holds for the $s_j$:

\begin{lemma} For each $m\geq -1$, $s_{-m}(x,\xi,\lambda)$ may be written as a sum of terms of the form (\ref{termform}); each summand has total weight $m-1$. Moreover, the total parity, normal parity, and $\xi$-parity of each summand are all well-defined, and their sum is equal to zero mod 2. \end{lemma}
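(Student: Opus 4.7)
The plan is induction on $m$, paralleling the proof of the preceding lemma and exploiting the recursive formulas \eqref{sminusone} and \eqref{recursives}. The whole argument rests on the following closure properties of expressions of the form \eqref{termformtwo}, all immediate from direct computation: multiplication adds total weights and adds all three parities mod $2$; a $\xi$-derivative flips $\xi$-parity and preserves the rest; a tangential $x$-derivative increases total weight by $1$ and preserves the two parities $p_N$ and $p_\xi$. These in turn rely on the identities $\partial_{\xi^\alpha}|\xi|^{-k}=-k|\xi|^{-k-2}g^{\alpha\beta}\xi_\beta$ and $\partial_{x^\gamma}(|\xi|-\lambda)^{-l}=-l(|\xi|-\lambda)^{-l-1}\partial_{x^\gamma}|\xi|$, together with the fact that the invariants are unchanged under the obvious ambiguities in writing a symbol in the form \eqref{termformtwo}.

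For the base case, \eqref{sminusone} together with $r_1=|\xi|$ gives $s_{-1}=(|\xi|-\lambda)^{-1}$, which fits \eqref{termformtwo} with $l=1$, $k=0$, $p\equiv 1$; it has total weight $0$, and all three parities vanish, so the parity-sum is $0\bmod 2$, as required.

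For the inductive step, assume the result for $s_{-1},\ldots,s_{-m}$, and expand $s_{-m-1}$ via \eqref{recursives} as a sum of terms of the shape
\[
-(r_1-\lambda)^{-1}\,\frac{(-i)^{|K|}}{K!}\,\partial_\xi^K r_k\,\partial_x^K s_j,
\qquad -m\le j\le -1,\ -m\le k\le 1,\ |K|=m+j+k\ge 0.
\]
By the preceding lemma, $r_k$ has total weight $1-k$ with parity-sum $0\bmod 2$; after $\partial_\xi^K$, the total weight is still $1-k$ and the parity-sum becomes $|K|\bmod 2$. By the inductive hypothesis, $s_j$ has total weight $-j-1$ with parity-sum $0\bmod 2$; after $\partial_x^K$, the total weight becomes $-j-1+|K|$ and the parity-sum is $|K|\bmod 2$. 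Multiplying these two factors yields a term of total weight $(1-k)+(-j-1+|K|)=|K|-j-k=m$, using the constraint $|K|=m+j+k$, with parity-sum $2|K|\equiv 0\pmod 2$. The remaining multiplication by $-(r_1-\lambda)^{-1}$, which has total weight $0$, all three parities equal to $0$, and merely contributes an extra factor of $(|\xi|-\lambda)^{-1}$, leaves the expression in the class \eqref{termformtwo} with total weight $m$ and parity-sum $0\bmod 2$. Since $m=(m+1)-1$, this is exactly the required total weight for $s_{-m-1}$, and the induction closes.

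The main obstacle is really a bookkeeping one: one must keep careful track of how $\xi$-differentiation, tangential $x$-differentiation, multiplication, and division by $(|\xi|-\lambda)$ act on each of the three pieces $(|\xi|-\lambda)^{-l}$, $|\xi|^{-k}$, and $p(x,\xi)$, and verify that the shifts in total weight and parity add up as claimed. Once those closure properties are in hand, the induction itself is essentially a line-by-line transcription of the argument used in the previous lemma.
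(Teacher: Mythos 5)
Your proposal is correct and follows essentially the same route as the paper: induction on $m$ using the recursion \eqref{recursives}, with the weight count $(1-k)+(-j-1)+|K|=m$ and the parity-sum count $|K|+|K|\equiv 0 \pmod 2$ matching the paper's argument exactly. The extra detail you supply on how $\xi$- and tangential $x$-derivatives act on expressions of the form \eqref{termformtwo} is exactly the bookkeeping the paper leaves implicit by reference to the preceding lemma.
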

\begin{proof}
The proof is extremely similar to the previous lemma, and again proceeds by induction on $m$. It is easy to check $s_{-1}$ explicitly. Now assume the inductive hypothesis.  Using the previous lemma and the inductive hypothesis, the total weight of each term in the sum (\ref{recursives}) for $s_{-m-1}$ is $(-k+1)+(-j-1)+K=m$, as required. Moreover, the sum of the total weights, normal parities, and $\xi$-parities of each term in $r_k$ and $s_j$ is equal to zero mod 2.  As before, we conclude that the sum of the total weights, normal parities, and $\xi$-parities of each term in (\ref{recursives}) is $K+K=0$ mod 2. This completes the proof. \end{proof}

We now pass from the parametrix $S(\lambda)$ to the heat invariants $a_k(x)$ themselves by using formula (\ref{heatinv}): fixing $x$, integrate in $\lambda$ and in $\xi$. Each individual term in $s_{-1-k}$ has the form (\ref{termformtwo}). The contour integral in (\ref{heatinv}) may be explicitly computed using Lemma \ref{contour}; the result for each term is $e^{-|\xi|}$ times an expression of the form (\ref{termform}) which has the same total weight, normal parity and $\xi$-parity as the terms in $s_{-1-k}$. Now perform the $\xi$ integral; this integral will vanish identically for all terms with $\xi$-parity 1, and so each term which survives is a polynomial in the metric, its inverse, and their derivatives at $x$, with total weight and normal parity summing to zero mod 2. Since the total weight is $k$, each term in $a_k(x)$ has normal parity equal to $k$ mod 2. This completes the proof of Theorem \ref{structure}.

\subsection{Generalizations and applications} 
\label{sec:applications}
Recall that for the Laplacian on a closed Riemannian manifold, the local heat invariants are polynomials in the curvature tensor and its covariant derivatives \cite{g}. We may re-write Theorem \ref{structure} in this form as well. Let $Riem_{\Omega}$ be the curvature tensor for $\Omega$ and let $\mathcal H$ be the second fundamental form of $M\subset\Omega$; also let $\nabla_{\Omega}$ and $\nabla_M$ be the covariant derivatives on $\Omega$ and $M$ respectively. Then

\begin{theorem}\label{refined} For $1\leq k\leq n-1$, the local Steklov heat invariant $a_k(x)$ may be written as a universal polynomial in the entries of the tensors $\nabla^{j}_{\Omega}Riem_{\Omega}$, $0\leq j\leq k-2$, and $\nabla^j_M\mathcal H$, $0\leq j\leq k-1$.
\end{theorem}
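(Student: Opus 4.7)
The plan is to reinterpret the polynomial in metric derivatives produced by Theorem \ref{structure} as a polynomial in the entries of the tensors $\nabla^j_\Omega Riem_\Omega$ and $\nabla^j_M \mathcal H$. Working at a fixed $P \in M$ in the boundary-normal coordinates of Section \ref{sec:coord}, with an orthonormal frame on $T_PM$, I would first establish a term-by-term correspondence between the jet of the metric at $P$ and the jets of $\mathcal H$ and $Riem_\Omega$ at $P$. Tangential derivatives of the induced metric $g_{\alpha\beta}(x',0)$ are handled by the classical fact that in Riemannian normal coordinates on $M$ the Taylor coefficients of the metric are universal polynomials in covariant derivatives of the intrinsic curvature, which via the Gauss equation can be rewritten in terms of $\mathcal H$ and the restriction of $Riem_\Omega$ to $M$. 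Purely normal derivatives at $P$ are handled by the relation $\mathcal H_{\alpha\beta}(P)=-\tfrac{1}{2}g_{\alpha\beta,n}(P)$ at first order and by the ambient Gauss equation for $\partial_{x^n}^2 g_{\alpha\beta}$ at second order, while mixed and higher-order derivatives follow by an inductive tangential prolongation of these relations, each additional derivative of the metric producing at most one additional derivative of one of the two tensors.

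Second, the weight accounting of Theorem \ref{structure} transfers directly. Each component of $\mathcal H$ involves one derivative of the metric, so $\nabla^j_M \mathcal H$ contributes to total weight $j+1$; each component of $Riem_\Omega$ involves two derivatives of the metric, so $\nabla^j_\Omega Riem_\Omega$ contributes to total weight $j+2$. Since $a_k(x)$ has total weight $k$, the orders of covariant derivatives that can appear are bounded by $j\leq k-1$ and $j\leq k-2$ respectively, which matches the ranges stated in the theorem. Because $a_k(x)$ is intrinsic, the resulting expression is automatically independent of the coordinates used to derive it, giving a universal polynomial in the entries of the listed tensors; when combined with the $O(n-1)$-invariance under rotations of the initial frame on $T_PM$, one recovers the standard Weyl-type description in which only full contractions of these tensors appear.

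The main obstacle is making the inductive prolongation step in the first paragraph fully precise: one must show that an arbitrary mixed normal-tangential derivative of $g_{\alpha\beta}$ at $P$ in boundary-normal coordinates is a universal polynomial in components of $\nabla^j \mathcal H$ and $\nabla^j Riem_\Omega$ of the claimed orders. The first few cases are immediate consequences of the Gauss, Codazzi, and Mainardi equations, and the general case follows by differentiating these relations along $M$ and using the boundary-normal form of the metric, but the bookkeeping that keeps the weight bounds sharp is the technically delicate part. Once this is in place, the statement of Theorem \ref{refined} is a formal consequence of Theorem \ref{structure}.
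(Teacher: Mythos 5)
Your proposal is correct, but it takes a genuinely different route from the paper. The paper's proof is two sentences: it cites Theorem 1.1.3 of Gilkey \cite{g} (the invariance-theoretic fact that a universal, coordinate-independent polynomial in the jets of the metric near a boundary point is expressible as a polynomial in covariant derivatives of the ambient curvature and of the second fundamental form), and then reads off the restrictions $j\leq k-2$ and $j\leq k-1$ from the weight count of Theorem \ref{structure} exactly as you do. You instead propose to reprove the needed special case of that citation by hand: substitute into the polynomial of Theorem \ref{structure} the Taylor expansion of the metric in boundary-normal coordinates, whose coefficients are universal polynomials in the entries of $\nabla^j_M\mathcal H$ and $\nabla^j_{\Omega}Riem_{\Omega}$ (via the Riccati equation for the shape operator in the normal direction, the normal-coordinate expansion tangentially, and Gauss--Codazzi to eliminate $Riem_M$). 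Your weight bookkeeping is the same as the paper's and is what actually produces the stated ranges of $j$; note that each factor of $\mathcal H$ carries weight $1$ and each factor of $Riem_{\Omega}$ carries weight $2$, so the bounds $j\leq k-1$ and $j\leq k-2$ follow from total weight $k$ exactly as you say. What your approach buys is self-containedness and an explicit mechanism for the substitution; what it costs is the ``inductive prolongation'' step you flag, which is standard (it is the Fermi-coordinate analogue of the classical normal-coordinate expansion) but which the paper sidesteps entirely by the citation. Also note that the final remark in your second paragraph about $O(n-1)$-invariance and full contractions proves more than the theorem asserts -- the statement only requires a polynomial in the entries of the tensors in some orthonormal frame -- so that step can be omitted.
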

\begin{proof} The statement is an immediate consequence of Theorem 1.1.3 in \cite{g}. The restrictions on $j$ follow from the weights in Theorem \ref{structure}. \end{proof}

An interesting special case is when the embedding $M\subset\Omega$ is \emph{product type}: suppose that there exists a $\delta>0$ and a tubular neighborhood $U$ of $M$ in $\Omega$ such that $U$ is isometric to $M\times [-\delta,\delta]_{x^n}$ with the product metric. In this situation, the second fundamental form $H$ is identically zero, as are all derivatives of the metric and its inverse in the $x^n$ direction. We claim
\begin{corollary}\label{producttype} If the embedding $M\subset\Omega$ is product type, the heat invariants $a_k(x)$ vanish for any odd $k$ between $1$ and $n-1$. On the other hand, for even $k$ between $1$ and $n-1$, the heat invariants are polynomials in the entries of the boundary curvature tensor $Riem_M$ and its covariant derivatives $\nabla_M^{j}Riem_M$, $0\leq j\leq k-2$.
\end{corollary}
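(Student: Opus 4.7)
The plan is to derive Corollary \ref{producttype} as a direct consequence of Theorem \ref{structure} (for the vanishing statement) together with Theorem \ref{refined} and the product structure (for the even case). First I would treat odd $k$. By Theorem \ref{structure}, every term contributing to $a_k(x)$ has odd normal weight, so each such term contains at least one factor that is a normal derivative of a metric component evaluated at $x \in M$. The product type hypothesis gives an isometry of a tubular neighborhood of $M$ with $M\times[-\delta,\delta]$ carrying the product metric, so in any boundary normal coordinates $(x,x^n)$ adapted to this splitting the metric coefficients $g_{\alpha\beta}$ are independent of $x^n$. Hence every normal derivative of the metric vanishes identically on $M$, forcing $a_k(x)=0$.

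For even $k$ I would instead invoke Theorem \ref{refined}, which writes $a_k(x)$ as a universal polynomial in the entries of $\nabla_\Omega^j \mathrm{Riem}_\Omega$ for $0\leq j\leq k-2$ and $\nabla_M^j \mathcal{H}$ for $0\leq j\leq k-1$. In the product type setting the second fundamental form vanishes identically in a neighborhood of $M$, and so do all its tangential covariant derivatives. Every monomial involving any factor $\nabla_M^j\mathcal{H}$ therefore drops out, and only monomials in the restrictions of $\nabla_\Omega^j\mathrm{Riem}_\Omega$ to boundary points survive.

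The remaining task is to rewrite these restrictions in terms of $\mathrm{Riem}_M$ and its tangential covariant derivatives. Because the tubular neighborhood is a Riemannian product, the curvature tensor of $\Omega$ splits: components with at least one normal slot vanish, and purely tangential components coincide with $\mathrm{Riem}_M$. The Levi-Civita connection splits in the same way, so that $\nabla_\Omega$ applied to a purely tangential tensor agrees with $\nabla_M$ in tangential directions and returns zero in the normal direction (the relevant Christoffel symbols vanish because the metric has no $x^n$ dependence). Iterating this observation, one concludes that the restriction of $\nabla_\Omega^j \mathrm{Riem}_\Omega$ to tangent vectors on $M$ equals $\nabla_M^j \mathrm{Riem}_M$ for $0\leq j\leq k-2$, and vanishes on all other tuples. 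Substituting into the polynomial expression supplied by Theorem \ref{refined} yields the claimed formula.

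I do not expect any serious obstacle. The only mildly delicate step is the third paragraph, where one must check that mixed normal-tangential covariant derivatives of $\mathrm{Riem}_\Omega$ also collapse to purely tangential derivatives of $\mathrm{Riem}_M$ on $M$; this is routine given that all $x^n$-derivatives of the metric, and hence of every Christoffel symbol mixing the two factors, vanish on the product neighborhood.
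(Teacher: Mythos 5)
Your proposal is correct and follows essentially the same route as the paper: the odd case is the normal-parity statement of Theorem \ref{structure} combined with the vanishing of all normal derivatives of the metric in the product neighborhood, and the even case is Theorem \ref{refined} together with the vanishing of $\mathcal H$ and the rewriting of $\nabla_\Omega^j Riem_\Omega$ in terms of $\nabla_M^j Riem_M$. The only difference is that you spell out the splitting of the curvature tensor and connection in the product more explicitly than the paper, which simply asserts the rewriting; this extra detail is harmless and arguably clarifying.
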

\begin{proof} The vanishing of the odd heat invariants follows immediately from Theorem \ref{structure}; each term in the universal polynomial for an odd heat invariant contains an odd number of normal derivatives of the metric, and hence contains at least one such normal derivative. Since the embedding is product type, each such term must be identically zero, and therefore the whole invariant is zero.

As for the even heat invariants, Theorem \ref{refined} indicates that they are universal polynomials in $Riem_{\Omega}$ and its $\Omega$-covariant derivatives. Since all normal derivatives vanish, we may rewrite all of the $\Omega$-covariant derivatives of $Riem_{\Omega}$ in terms of $M$-covariant derivatives of $Riem_M$. Finally, examining the weights in Theorem \ref{structure}, the corollary follows immediately.
\end{proof}

Note that in the product-type case, it is known that the Dirichlet-to-Neumann operator is a square root of the boundary Laplacian, in the sense that its square is equal, modulo infinitely smoothing operators, to the boundary Laplacian \cite{l}.

\section{Computation of heat invariants}

In this section, we compute and analyze the first few heat invariants, proving Theorem \ref{explicit}. Throughout, we fix a point $P$ on $M$ and use the $P$-centered coordinates from Section 2.1 to compute $a_k(P)$. Recall from Section 2.1 that we may choose an initial orthonormal frame on $M$ which diagonalizes the second fundamental form at $P$; we use this frame in all calculations that follow.

\subsection{Computations of $a_0$ and $a_1$} 
\label{41}
We now evaluate the expressions (\ref{r1}), (\ref{r0}), (\ref{sminusone}), and (\ref{sminustwo}) at $P$ using our local coordinates. These expressions, written out, are
\[\hat r_1=-\sqrt{q_2}=-\sqrt{g^{\alpha\beta}\xi^{\alpha}\xi^{\beta}};\]
\[\hat r_0=\frac{1}{8q_2^{3/2}}(\sum_{\gamma}-i\partial_{\xi^{\gamma}}q_2\partial_{x^{\gamma}}q_2)-\frac{q_1}{2\sqrt{q_2}}-\frac{1}{4q_2}\partial_{x^n}q_2+\frac{E}{2}\]
\[=\frac{-i}{8q_2^3}\sum_{\gamma}(\sum_{\alpha,\beta} g^{\alpha\beta}(\delta^{\gamma\alpha}\xi^{\beta}+\delta^{\gamma\beta}\xi^{\alpha}))(\sum_{\alpha,\beta}g^{\alpha\beta,\gamma}\xi^{\alpha}\xi^{\beta})-\frac{1}{4q_2^2}\sum_{\alpha,\beta}g^{\alpha\beta,n}\xi^{\alpha}\xi^{\beta}\]\begin{equation}\label{writtenout}+\frac{i}{2q_2}\sum_{\alpha,\beta}(\frac{1}{2}g^{\alpha\beta}(x')\partial_{x^{\alpha}}\log\delta(x')
+\partial_{x^{\alpha}}g^{\alpha\beta}(x'))\xi^{\beta}
-\frac{1}{4}\sum_{\alpha,\beta}g^{\alpha\beta}g_{\alpha\beta,n}.\end{equation}
Since we are using Riemannian normal coordinates at $P$, any first derivative in $x^{\alpha}$ of the metric or its inverse, for $1\leq\alpha\leq n-1$, vanishes at $P$. Using (\ref{writtenout}) and the fact that $\D$ is $-1$ times the restriction of $\hat\D$ to $x^n=0$, we obtain that $r_1(P,\xi)=|\xi|$ and
\begin{equation}\label{rzeroatp}r_0(P,\xi)= \frac{1}{2|\xi|^2}\sum_{\alpha}\lambda_{\alpha}(\xi^{\alpha})^2-\frac{1}{2}\sum_{\alpha}\lambda_{\alpha}.\end{equation}
Note that the subprincipal symbol of $\mathcal D$, given in (\ref{rzeroatp}), has previously been computed by Taylor (Ch. 12, Proposition C1 in \cite{Ta}).  We may also simplify (\ref{sminusone}) and (\ref{sminustwo}); note in particular that any tangential first derivative of $r_1$ vanishes at $P$. Also remember that $g^{\alpha\beta}(P)=\delta^{\alpha\beta}$, $g_{\alpha\beta,n}(P)=-2\lambda_{\alpha}\delta_{\alpha\beta}$, and $g^{\alpha\beta,n}(P)=2\lambda_{\alpha}\delta_{\alpha\beta}$. The results are
\[s_{-1}(P,\xi,\lambda)=(|\xi|-\lambda)^{-1};\]\[s_{-2}(P,\xi,\lambda)=-(|\xi|-\lambda)^{-2}(\frac{1}{2|\xi|^2}\sum_{\alpha}\lambda_{\alpha}(\xi^{\alpha})^2-\frac{1}{2}\sum_{\alpha}\lambda_{\alpha}).\]

Now plug these expressions into (\ref{heatinv}). The contour integrals may be computed explicitly:
\begin{lemma}\label{contour} For any $k\geq 1$ and any $a\in\mathbb R^+$,
\[\int_{\mathcal C}\frac{e^{-\lambda}}{(a-\lambda)^k}\ d\lambda=-2\pi i\frac{e^{-a}}{(k-1)!}.\]
\end{lemma}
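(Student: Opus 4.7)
The plan is a direct application of the residue theorem. The contour $\mathcal{C}$ (the same $\Gamma$ from Section~\ref{sec:Seeley}) is the standard Hankel-type contour encircling the positive real axis counterclockwise, so since $a > 0$ the only singularity of $e^{-\lambda}/(a-\lambda)^k$ enclosed by $\mathcal{C}$ is the pole of order exactly $k$ at $\lambda = a$. The integrand decays exponentially as $\mathrm{Re}\,\lambda \to +\infty$ along the two arms of $\mathcal{C}$, so the integral converges absolutely and (after closing up the contour by a large arc at radius $R \to \infty$, whose contribution vanishes) equals $2\pi i$ times the residue at $\lambda = a$.

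To compute that residue, I would rewrite the integrand as $(-1)^k e^{-\lambda}/(\lambda - a)^k$ and apply the standard residue formula for a pole of order $k$:
\[
\mathrm{Res}_{\lambda=a}\frac{(-1)^k e^{-\lambda}}{(\lambda - a)^k} = \frac{(-1)^k}{(k-1)!}\left.\frac{d^{k-1}}{d\lambda^{k-1}}e^{-\lambda}\right|_{\lambda=a} = \frac{(-1)^k(-1)^{k-1}e^{-a}}{(k-1)!} = -\frac{e^{-a}}{(k-1)!}.
\]
Multiplying by $2\pi i$ yields the claimed identity.

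A slightly more conceptual alternative would be to establish the $k=1$ case first (where the residue at $\lambda=a$ is immediately $-e^{-a}$) and then differentiate both sides $k-1$ times with respect to the parameter $a$, using the identity $(a-\lambda)^{-k} = \frac{(-1)^{k-1}}{(k-1)!}\partial_a^{k-1}(a-\lambda)^{-1}$; the exponential decay noted above justifies exchanging $\partial_a^{k-1}$ with the contour integral. Either route is straightforward and I anticipate no real obstacle; this is a textbook residue calculation whose only subtlety is careful bookkeeping of the sign coming from $(a-\lambda)^k$ versus $(\lambda-a)^k$, together with the orientation convention that makes the enclosed pole count with winding number $+1$.
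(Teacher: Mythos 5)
Your proof is correct and is essentially the paper's own argument: the paper simply states that the lemma follows from ``a simple computation with calculus of residues,'' and your residue calculation at the order-$k$ pole $\lambda=a$ (with the sign bookkeeping from $(a-\lambda)^k=(-1)^k(\lambda-a)^k$ and the counterclockwise orientation) supplies exactly the details the paper omits. The result $2\pi i\cdot\bigl(-e^{-a}/(k-1)!\bigr)$ matches the stated formula and is consistent with the normalization check against \eqref{a0}.
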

\begin{proof} The proof is a simple computation with calculus of residues.
\end{proof}
Using the contour integrals, then using the integrals in section \ref{auxint} to evaluate the $\xi$ integrals, we have, where $V_n=\Vol(\mathbb{S}^{n-2})$,
\begin{equation}\label{a0}a_0(P)=(2\pi)^{-n+1}\int_{\mathbb R^{n-1}}e^{-|\xi|}\ d\xi=(2\pi)^{-n+1}V_n\Gamma(n-1);\end{equation}
\[a_1(P)=(2\pi)^{-n+1}\sum_{\alpha}\frac{\lambda_{\alpha}}{2}\int_{\mathbb R^{n-1}}e^{-|\xi|}(1-\frac{(\xi^{\alpha})^2}{|\xi|^2})\ d\xi=\frac{V_n(n-2)\Gamma(n-1)}{2(n-1)(2\pi)^{n-1}}\sum_{\alpha}\lambda_{\alpha}\]\begin{equation}\label{a1}=\frac{V_n(n-2)\Gamma(n-1)}{2(2\pi)^{n-1}}H_1.\end{equation} This completes the proof of the expressions for $a_0$ and $a_1$ in Theorem \ref{explicit}.

\subsection{Computation of $a_2$} 
\label{42}
The computation of $a_2$ is somewhat more involved; this is typical of heat invariant calculations, which tend to increase dramatically in complexity as one goes farther out in the expansion. As a starting point, we write out the expression for $s_{-3}$ given in (\ref{sminusthree}):

\[s_{-3}(P,\xi,\lambda)=(r_0(P))^2(|\xi|-\lambda)^{-3}-(r_{-1}(P))(|\xi|-\lambda)^{-2}-i(\partial_{\xi}r_1\cdot\partial_{x}r_0)(|\xi|-\lambda)^{-3}\]\[+(\partial_{\xi}r_1\cdot\partial_{x}(\partial_{\xi}r_1\cdot\partial_{x}r_1))(|\xi|-\lambda)^{-4}+(r_1-\lambda)^{-1}(\sum_{|K|=2}\frac{1}{K!}(\partial_{\xi}^Kr_1)(P)(\partial_{x}^Ks_{-1})(P)).\]
Simplifying, using the fact that first derivatives in $x$ of $r_1$, and therefore also first derivatives in $x$ of $s_{-1}$, vanish at $P$:
\[s_{-3}(P,\xi,\lambda)=(r_0(P))^2(|\xi|-\lambda)^{-3}-(r_{-1}(P))(|\xi|-\lambda)^{-2}-i(\partial_{\xi}r_1\cdot\partial_{x}r_0)(|\xi|-\lambda)^{-3}\]\[+(\sum_{\gamma,\epsilon}(\partial_{\xi^\epsilon}r_1\partial_{\xi^{\gamma}}r_1\partial_{x^{\gamma}}\partial_{x^{\epsilon}}r_1))(|\xi|-\lambda)^{-4}-(\sum_{|K|=2}\frac{1}{K!}(\partial_{\xi}^Kr_1)(P)(\partial_{x}^Kr_{1})(P))(|\xi|-\lambda)^{-3}.\]
As before, plug this expression into (\ref{heatinv}) and evaluate the contour integrals. We also switch from $r_j$ to $\tilde r_j=-r_j$ for later ease of computation. After all this, we obtain the following expression for $a_2$:
\begin{equation}\label{a2}a_2(P)=(2\pi)^{-n+1}\int_{\mathbb R^{n-1}}b(P,\xi)e^{-|\xi|}\ d\xi,\end{equation}
where
\[b(P,\xi)=\frac{(\tilde r_0(P,\xi))^2}{2}+\tilde r_{-1}(P,\xi)-\frac{i}{2}(\partial_{\xi}\tilde r_1\cdot\partial_{x}\tilde r_0)\]\begin{equation}\label{eqone}-\frac{1}{6}\sum_{\gamma,\epsilon}(\partial_{\xi^\epsilon}\tilde r_1\partial_{\xi^{\gamma}}\tilde r_1\partial_{x^{\gamma}}\partial_{x^{\epsilon}}\tilde r_1)-\frac{1}{2}(\sum_{|K|=2}\frac{1}{K!}(\partial_{\xi}^K\tilde r_1)(P,\xi)(\partial_{x}^K\tilde r_{1})(P,\xi)).\end{equation}

Our strategy is a direct approach: write out $b(P,\xi)$ in terms of the metric and then integrate to get $a_2(P)$. In this calculation, $b(P,\xi)$ splits naturally into two components $b_n(P,\xi)$ and $b_t(P,\xi)$, which we call the \emph{normal} and \emph{tangential} components respectively. To define $b_n$ and $b_t$, first write out the expression for $\hat r_{-1}$ in terms of $\hat r_1$ and $\hat r_0$:
\begin{multline}\label{eqtwo}
\hat r_{-1}=\frac{1}{2|\xi|}\left((\hat r_0)^2-i\sum_{\gamma}(\partial_{\xi^{\gamma}}\hat r_0\partial_{x^{\gamma}}\hat r_1+\partial_{\xi^{\gamma}}\hat r_1\partial_{x^{\gamma}}\hat r_0)-\sum_{|K|=2}\frac{1}{K!}(\partial_{\xi}^K\hat r_1\partial_{x}^K\hat r_1)\right.\\ \left.+\partial_{x^n}\hat r_0+\frac{1}{2}\sum_{\alpha,\beta}g^{\alpha\beta}g_{\alpha\beta,n}\hat r_0\right).\end{multline}
The normal component $b_n$ consists of the terms involving normal derivatives of the metric; as we will see, these are the first term in (\ref{eqone}) along with the restrictions to $x^n=0$ of the first term and the last two terms in (\ref{eqtwo}). The tangential component $b_t$ is the remainder. The expressions for $b_n$ and $b_t$ may be written out and simplified; note in particular that the term in $\hat r_{-1}$ involving a first derivative of the metric, $\partial_{x^{\gamma}}\hat r_1$, is zero at $P$. Writing out the multi-index notation, and noting that the factor of $K!^{-1}$ precisely compensates for the double-counting, we have:
\begin{equation}\label{normalint}b_n(P,\xi)=(\frac{1}{2}+\frac{1}{2|\xi|})\tilde r_0^2(P,\xi)+\frac{1}{2|\xi|}(\partial_{x^n}\hat r_0)(P,\xi)-\frac{1}{2|\xi|}\sum_{\alpha}\lambda_{\alpha}\tilde r_0(P,\xi);\end{equation}
\begin{multline}\label{tangentialint}b_t(P,\xi)=\left[-i(\frac{1}{2}+\frac{1}{2|\xi|})\sum_{\gamma}\partial_{\xi^{\gamma}}\tilde r_1\cdot\partial_{x^{\gamma}}\tilde r_0-\frac{1}{6}\sum_{\gamma,\epsilon}\partial_{\xi^\epsilon}\tilde r_1\cdot\partial_{\xi^{\gamma}}\tilde r_1\cdot\partial_{x^{\gamma}}\partial_{x^{\epsilon}}\tilde r_1\right.\\ \left.
-(\frac{1}{4}+\frac{1}{4|\xi|})\sum_{\gamma,\epsilon}\partial_{\xi^{\gamma}}\partial_{\xi^{\epsilon}}\tilde r_1\cdot \partial_{x^{\gamma}}\partial_{x^{\epsilon}}\tilde r_1\right](P,\xi).\end{multline}

Now we write $b_n$ and $b_t$ in terms of the metric. Recall that when integrating over the tangent space, any term which is odd in $\xi^{\alpha}$ for any particular $\alpha$ will integrate to zero. We therefore define an equivalence relation $\cong$ on functions of $\xi$ by writing $a(\xi)\cong b(\xi)$ iff
\[\int_{\mathbb R^{n-1}}e^{-|\xi|}(a(\xi)-b(\xi))\ d\xi = 0.\] 

\begin{lemma}\label{compute} Let $R_{ij}$ be the Ricci tensor on $M$. Then:
\[b_n(P,\xi)\cong\frac{1}{8}\sum_{\alpha,\beta}\lambda_{\alpha}\lambda_{\beta}(1+\frac{1}{|\xi|})(1-\frac{(\xi^{\alpha})^2}{|\xi|^2})(1-\frac{(\xi^{\beta})^2}{|\xi|^2})+\frac{1}{2|\xi|^5}\sum_{\alpha,\beta}\lambda_{\alpha}\lambda_{\beta}(\xi^{\alpha})^2(\xi^{\beta})^2\]
\[-\frac{1}{8|\xi|^3}\sum_{\alpha}g^{\alpha\alpha,nn}(\xi^{\alpha})^2+\frac{1}{2|\xi|}\sum_{\alpha}\lambda_{\alpha}^2-\frac{1}{8|\xi|}\sum_{\alpha}g_{\alpha\alpha,nn}-\frac{1}{4|\xi|}\sum_{\alpha,\beta}\lambda_{\alpha}\lambda_{\beta}(1-\frac{(\xi^{\beta})^2}{|\xi|^2});\]
\[b_t(P,\xi)\cong(\frac{1}{12|\xi|^3}+\frac{1}{12|\xi|^2})\sum_{\alpha}R_{\alpha\alpha}(\xi^{\alpha})^2.
\]
\end{lemma}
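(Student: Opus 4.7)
The plan is to compute $b_n(P,\xi)$ and $b_t(P,\xi)$ separately from their defining expressions (\ref{normalint}) and (\ref{tangentialint}), by substituting the explicit forms of the symbols in the boundary normal coordinates of Section 2.1 and then simplifying modulo the equivalence relation $\cong$.

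For $b_n$, substituting the formula (\ref{rzeroatp}) gives $\tilde r_0(P,\xi) = \tfrac{1}{2}\sum_\alpha\lambda_\alpha(1-(\xi^\alpha)^2/|\xi|^2)$; squaring this immediately produces the $\lambda_\alpha\lambda_\beta$ double sum in the first line of the claimed formula, and the last summand $-\tfrac{1}{2|\xi|}\sum_\alpha\lambda_\alpha\tilde r_0(P,\xi)$ is direct. The nontrivial step is computing $\partial_{x^n}\hat r_0(P,\xi)$: I differentiate each of the four summands in the written-out expression (\ref{writtenout}) for $\hat r_0$ in $x^n$ and then evaluate at $P$. Because all tangential first derivatives of the metric vanish at $P$, most cross terms drop out; what survives are bilinears $g_{\alpha\beta,n}(P)g_{\gamma\delta,n}(P)$, which produce additional $\lambda_\alpha\lambda_\beta$ contributions, together with linear terms in the second normal derivatives $g_{\alpha\beta,nn}(P)$ and $g^{\alpha\beta,nn}(P)$. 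Expanding, collecting, and invoking (\ref{secondorderexp}) where convenient gives the asserted expression for $b_n$.

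For $b_t$, I use the standard Riemannian normal coordinate expansion on $M$: since all first tangential derivatives of the metric vanish at $P$, the pure second tangential derivatives $\partial_{x^\gamma}\partial_{x^\epsilon}g^{\alpha\beta}(P)$ are linear in the boundary Riemann tensor at $P$. In particular $\partial_{x^\gamma}\tilde r_1(P)=0$, and the only surviving second tangential derivatives of $\tilde r_1$ at $P$ are $\partial_{x^\gamma}\partial_{x^\epsilon}\tilde r_1(P,\xi)=-\tfrac{1}{2|\xi|}\partial_{x^\gamma}\partial_{x^\epsilon}g^{\alpha\beta}(P)\xi^\alpha\xi^\beta$. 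Substituting into the last two summands of (\ref{tangentialint}), contracting the Riemann components, and applying the symmetries of the Riemann tensor together with the trace identity $R_{\alpha\alpha}=\sum_\gamma R_{\alpha\gamma\alpha\gamma}$, one obtains the stated $\sum_\alpha R_{\alpha\alpha}(\xi^\alpha)^2$ form after discarding terms odd in some $\xi^\alpha$. The first summand in (\ref{tangentialint}), which contains $\partial_{x^\gamma}\tilde r_0$, contributes only mixed-derivative expressions proportional to $g_{\alpha\beta,n\gamma}(P)\xi^\gamma$; these are odd in $\xi^\gamma$ and hence $\cong 0$.

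The main obstacle is the computation of $\partial_{x^n}\hat r_0(P,\xi)$: each of the four pieces of (\ref{writtenout}) has to be differentiated and then evaluated at $P$, with careful attention to how products of first-derivative factors generate both $\lambda_\alpha\lambda_\beta$ and $g_{\alpha\beta,nn}$ contributions, and to the asymmetry between normal derivatives of $g_{\alpha\beta}$ and of $g^{\alpha\beta}$ encoded in (\ref{secondorderexp}). A secondary bookkeeping task for $b_t$ is verifying that every mixed-derivative contribution is $\xi$-odd and therefore drops under $\cong$, so that only the pure tangential Ricci term with the predicted coefficient $\tfrac{1}{12|\xi|^3}+\tfrac{1}{12|\xi|^2}$ remains.
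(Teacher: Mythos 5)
Your plan for $b_n$ is essentially the paper's (direct substitution of (\ref{rzeroatp}) and term-by-term differentiation of (\ref{writtenout}) in $x^n$), but one justification is off: the terms obtained by applying $\partial_{x^n}$ to the first and third summands of (\ref{writtenout}) do \emph{not} vanish at $P$ merely because tangential first derivatives of the metric vanish there -- the normal derivative produces mixed second derivatives such as $g^{\alpha\beta,\gamma n}(P)$, which are generically nonzero. Those contributions drop only because the imaginary part of $\hat r_0$ is odd in $\xi$, so its $x^n$-derivative is still $\xi$-odd and hence $\cong 0$. With that fix your $b_n$ computation goes through (note also that the $\tfrac{1}{2|\xi|^5}$ term comes from $\partial_{x^n}$ hitting the $x'$-dependent factor $q_2^{-1}$, and that (\ref{secondorderexp}) is not needed here since the lemma keeps $g^{\alpha\alpha,nn}$ and $g_{\alpha\alpha,nn}$ separate).

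The $b_t$ computation, however, has a genuine gap that would yield the wrong answer. You discard the summand $-i(\tfrac12+\tfrac{1}{2|\xi|})\sum_\gamma\partial_{\xi^\gamma}\tilde r_1\,\partial_{x^\gamma}\tilde r_0$ of (\ref{tangentialint}) on the grounds that $\partial_{x^\gamma}\tilde r_0$ produces only $\xi$-odd mixed-derivative terms. That accounts only for the \emph{real} part of $\hat r_0$. Its imaginary part (the first and third summands of (\ref{writtenout})) vanishes at $P$ but its tangential derivative does not: it yields second tangential derivatives $g^{\alpha\beta,\epsilon\gamma}(P)$ and $\partial_{x^\gamma}\partial_{x^\alpha}\log\delta(P)$, i.e.\ curvature terms. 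These are $\xi$-odd, and since $\partial_{\xi^\gamma}\tilde r_1$ is proportional to $\xi^\gamma/|\xi|$ and hence also $\xi$-odd (while the two factors of $i$ combine to give a real quantity), the product is $\xi$-even and survives the relation $\cong$. In the paper this summand produces (\ref{termthree}), and after applying (\ref{helpfulone}), (\ref{helpfulthree}) and the curvature symmetries it is precisely the source of the surviving term $+(\tfrac{1}{12|\xi|^3}+\tfrac{1}{12|\xi|^2})\sum_\alpha R_{\alpha\alpha}(\xi^\alpha)^2$, while the two summands you keep produce (\ref{termtangtwo}), whose net Ricci contribution is $-(\tfrac{1}{12|\xi|^3}+\tfrac{1}{12|\xi|^2})\sum_\alpha R_{\alpha\alpha}(\xi^\alpha)^2$ and is cancelled by part of the summand you dropped (terms 3 and 4 of (\ref{simplifythis})). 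Carrying out your plan as written therefore gives $b_t$ with the opposite sign; you must differentiate the full expression (\ref{writtenout}) tangentially, keeping its imaginary part.
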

The proof is a direct calculation; we plug in the formulas for the $\tilde r_i$ and simplify. The analysis of $b_t$ uses the Taylor expansion of a Riemannian metric in normal coordinates as well as symmetries of the Riemann curvature tensor. The details of the proof are deferred to section \ref{pcompute}.

Since we have found $b=b_n+b_t$, we proceed to compute $a_2(P)$, which we correspondingly write $a_{2,n}(P)+a_{2,t}(P)$. In the computation, the integrals in section \ref{auxint} are useful. Each integral contains a factor of $V_n$, and since each integral has either $k=0$ or $k=-1$, we can also bring out a factor of $\Gamma(n-2)$ (and then multiply the $k=0$ integrals by $n-2$ to compensate). We obtain:
\[a_{2,t}(P)=\frac{V_n\Gamma(n-2)}{12(2\pi)^{n-1}}R_M,\]
\begin{multline}a_{2,n}(P)=\frac{V_n\Gamma(n-2)}{(2\pi)^{n-1}}\left[\frac{1}{8}\sum_{\alpha,\beta}\lambda_{\alpha}\lambda_{\beta}(n-1)(1-\frac{2}{n-1}+\frac{1+2\delta_{\alpha\beta}}{n^2-1})+\frac{1}{2}\sum_{\alpha,\beta}\lambda_{\alpha}\lambda_{\beta}\frac{1+2\delta_{\alpha\beta}}{n^2-1}\right.\\ \left.-\frac{1}{8(n-1)}\sum_{\alpha}g^{\alpha\alpha,nn}+\frac{1}{2}\sum_{\alpha}\lambda_{\alpha}^2-\frac{1}{8}\sum_{\alpha}g_{\alpha\alpha,nn}-\frac{1}{4}\sum_{\alpha,\beta}\lambda_{\alpha}\lambda_{\beta}(1-\frac{1}{n-1})\right].\end{multline}
Simplifying and using (\ref{secondorderexp}), we find
\begin{multline}a_{2,n}(P)=\frac{V_n\Gamma(n-2)}{8(2\pi)^{n-1}}\left[(n-5+\frac{2}{n-1}+\frac{n+3}{n^2-1})\sum_{\alpha,\beta}\lambda_{\alpha}\lambda_{\beta}\right.\\ \left.+(\frac{2n+6}{n^2-1}+4-\frac{8}{n-1})\sum_{\alpha}\lambda_{\alpha}^2-\frac{n-2}{n-1}\sum_{\alpha}g_{\alpha\alpha,nn}\right].\end{multline}
Finally, recall that
\begin{equation}\label{eigencurv}\sum_{\alpha,\beta}\lambda_{\alpha}\lambda_{\beta}=(n-1)^2H_1^2;\ \ \sum_{\alpha}\lambda_{\alpha}^2=(n-1)^2H_1^2-(n-1)(n-2)H_2,\end{equation}so we may rewrite the eigenvalue sums as combinations of $H_1^2$ and $H_2$. Moreover,
the last term in the expression for $a_{2,n}$ may also be rewritten in terms of mean curvatures:
\begin{lemma}\label{scalcurvform}
\[-\sum_{\alpha}g_{\alpha\alpha,nn}(P)=R_{\Omega}-R_M-2(n-1)^2H_1^2+3(n-1)(n-2)H_2.\]
\end{lemma}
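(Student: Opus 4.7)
The strategy is to compute the normal-normal Ricci curvature $\operatorname{Ric}_{\Omega}(e_n,e_n)$ at $P$ in two independent ways. The first expression, obtained invariantly via the Gauss equation, will feature $R_{\Omega}$, $R_M$, and $H_2$; the second, obtained by a direct Christoffel-symbol calculation in boundary normal coordinates, will feature $\sum_\alpha\lambda_\alpha^2$ and $\sum_\alpha g_{\alpha\alpha,nn}(P)$. Equating the two and using (\ref{eigencurv}) to convert $\sum_\alpha\lambda_\alpha^2$ into a combination of $H_1^2$ and $H_2$ will yield the claimed identity.

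For the invariant side, I would apply the Gauss equation at $P$. Since the frame diagonalizes the second fundamental form with eigenvalues $\lambda_\alpha$, the sectional curvatures of $M$ and $\Omega$ on tangent 2-planes at $P$ satisfy $K_M(e_\alpha,e_\beta)=K_{\Omega}(e_\alpha,e_\beta)+\lambda_\alpha\lambda_\beta$ for $\alpha\neq\beta$. Summing over ordered pairs and using the decomposition $R_{\Omega}=\sum_{\alpha\neq\beta}K_{\Omega}(e_\alpha,e_\beta)+2\operatorname{Ric}_{\Omega}(e_n,e_n)$ gives
$$2\operatorname{Ric}_{\Omega}(e_n,e_n)=R_{\Omega}-R_M+\sum_{\alpha\neq\beta}\lambda_\alpha\lambda_\beta=R_{\Omega}-R_M+(n-1)(n-2)H_2,$$
where the last step is the defining relation for $H_2$ from the statement of Theorem \ref{explicit}.

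For the coordinate side, I would compute $R_{nn}(P)=\sum_\alpha R^{\alpha}{}_{n\alpha n}(P)$ directly from
$R^{i}{}_{jkl}=\partial_k\Gamma^{i}_{lj}-\partial_l\Gamma^{i}_{kj}+\Gamma^{i}_{km}\Gamma^{m}_{lj}-\Gamma^{i}_{lm}\Gamma^{m}_{kj}$. The identities $g_{nn}\equiv 1$ and $g_{\alpha n}\equiv 0$ in a neighborhood of $P$ in $\Omega$ force $\Gamma^{m}_{nn}\equiv 0$ for all $m$, while the Riemannian normal coordinates on $M$ kill all tangential first derivatives of the metric at $P$. Only two surviving terms contribute at $P$: using $g^{\alpha\beta,n}(P)=2\lambda_\alpha\delta_{\alpha\beta}$ and $g_{\alpha\beta,n}(P)=-2\lambda_\alpha\delta_{\alpha\beta}$, the derivative term yields $\partial_n\Gamma^{\alpha}_{\alpha n}(P)=-2\lambda_\alpha^2+\tfrac12 g_{\alpha\alpha,nn}(P)$ and the quadratic term yields $\sum_m\Gamma^{\alpha}_{nm}\Gamma^{m}_{\alpha n}(P)=\lambda_\alpha^2$. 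Summing over $\alpha$ gives
$$\operatorname{Ric}_{\Omega}(e_n,e_n)=\sum_\alpha\lambda_\alpha^2-\tfrac12\sum_\alpha g_{\alpha\alpha,nn}(P).$$

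Equating the two expressions for $\operatorname{Ric}_{\Omega}(e_n,e_n)$, solving for $-\sum_\alpha g_{\alpha\alpha,nn}(P)$, and substituting $\sum_\alpha\lambda_\alpha^2=(n-1)^2H_1^2-(n-1)(n-2)H_2$ from (\ref{eigencurv}) produces exactly the claimed formula. The main obstacle is simply the careful bookkeeping in the direct Ricci calculation: one must verify that $\Gamma^{\alpha}_{nn}$ vanishes identically (not merely at $P$) so that $\partial_\alpha\Gamma^{\alpha}_{nn}(P)=0$, and that no mixed $m=n$ contributions are missed in the quadratic terms. Everything beyond that reduces to the Gauss equation and routine algebra.
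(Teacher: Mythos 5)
Your argument is correct, and it takes a genuinely different (and somewhat cleaner) route than the paper. The paper works entirely with the coordinate formula for the scalar curvature: it writes $R_{\Omega}$ at $P$ as $\Gamma^c_{aa,c}-\Gamma^c_{ac,a}+\Gamma^{d}_{aa}\Gamma^c_{cd}-\Gamma^d_{ac}\Gamma^c_{ad}$, observes that the all-tangential part of this sum reproduces $R_M$ (because in boundary normal coordinates the tangential Christoffel symbols and their tangential derivatives coincide with the intrinsic ones), and then evaluates by hand every remaining term in which at least one index equals $n$, splitting into three cases. You instead compute only $\operatorname{Ric}_{\Omega}(e_n,e_n)$ in coordinates — a strictly smaller calculation, and your intermediate values $\partial_n\Gamma^{\alpha}_{\alpha n}(P)=-2\lambda_{\alpha}^2+\tfrac12 g_{\alpha\alpha,nn}(P)$ and $\sum_m\Gamma^{\alpha}_{nm}\Gamma^m_{\alpha n}(P)=\lambda_{\alpha}^2$ check out, as does the identical vanishing of $\Gamma^m_{nn}$ — and you let the twice-contracted Gauss equation $2\operatorname{Ric}_{\Omega}(e_n,e_n)=R_{\Omega}-R_M+(n-1)(n-2)H_2$ absorb the tangential-tangential curvature discrepancy invariantly. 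Eliminating $\operatorname{Ric}_{\Omega}(e_n,e_n)$ and substituting \eqref{eigencurv} gives exactly the stated identity (and both routes pass the unit-ball sanity check $\lambda_{\alpha}=1$, $g_{\alpha\alpha,nn}=2$). What your approach buys is less index bookkeeping and a conceptual separation between the extrinsic Gauss-equation content and the one coordinate-dependent quantity $\sum_{\alpha}g_{\alpha\alpha,nn}(P)$; what the paper's approach buys is self-containedness, since it never needs to invoke the Gauss equation or fix its sign conventions. No gaps.
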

This proof is deferred to section \ref{pscal}.
From Lemma \ref{scalcurvform} and (\ref{eigencurv}), we conclude after some algebraic manipulations that
\begin{multline}a_{2,n}(P)=\frac{V_n\Gamma(n-2)}{8(2\pi)^{n-1}}\left[\frac{(n-1)(n-2)(n^2-n-4)}{n+1}H_1^2\right.\\ \left.-\frac{n(n-3)(n-2)}{n+1}H_2+\frac{n-2}{n-1}(R_{\Omega}-R_M)\right].\end{multline}
Combining this with $a_{2,t}(P)$ yields Theorem \ref{explicit}.
\subsection{Example: heat invariants of balls in $\mathbb{R}^3$ and $\mathbb{R}^4$}
\label{sec:example}
 In this section we verify the formulas for the heat invariants obtained using Theorem \ref{explicit} by calculating them directly for the  
unit balls $\mathbb{B}^3 \subset  \mathbb{R}^3$ and $\mathbb{B}^4 \subset \mathbb{R}^4$. It is well-known that the eigenvalues of the Steklov problem on a unit ball $\mathbb{B}^n \subset \mathbb{R}^n$ are given by a sequence of natural numbers $k=0,1,2,3\dots$, with each eigenvalue $k$ repeated according to its multiplicity
\begin{equation}
\label{ballmult}
m_k=\frac{(2k+n-2)(k+n-3)!}{k! (n-2)!}.
\end{equation}
Note that the eigenfunctions of the Dirichlet-to-Neumann operator $\D$ on $\mathbb{S}^{n-1}$ are spherical harmonics, and the numbers $m_k$ are the multiplicities 
of the Laplace--Beltrami eigenvalues on $\mathbb{S}^{n-1}$  (see, for example, \cite{Po}).  The Steklov heat trace on a ball is given by an explicit formula
\begin{equation}
\label{heat:balls}
\Tr e^{-t\D}= \sum_{k=0}^{\infty} \frac{(2k+n-2)(k+n-3)!}{k! (n-2)!}\, e^{-kt}.
\end{equation}
Let us compute its asymptotics as $t\to 0+$  for $n=3$ and $n=4$.
For $n=3$, the series \eqref{heat:balls} takes the form
$$
\sum_{k=0}^{\infty} (2k+1) e^{-kt}=\frac{1+e^{-t}}{(1-e^{-t})^2}=t^{-2}\left(2+t+\frac{t^2}{3}+O(t^3)\right),
$$
and therefore the corresponding heat invariants are $a_0=2$, $a_1=1$ and $a_2=1/3$.
Taking into account that $V_3=2 \pi$, $H_1=H_2=1$ at each point $x \in \mathbb{S}^2$ and $\Vol(\mathbb{S}^2)=4\pi$,  we obtain precisely the same values for $a_0, a_1$ and $a_2$ from  Theorem \ref{explicit}.

Let  now $n=4$. Then \eqref{heat:balls} takes the form
$$
\sum_{k=0}^{\infty} (k+1)^2 e^{-kt}=e^t\sum_{k=0}^\infty (k+1)^2 e^{-(k+1)t} = \frac{e^{2t} (1+e^t)}{(e^t-1)^3}=t^{-3}\left(2+2t+t^2+O(t^3)\right).
$$
The corresponding heat invariants are $a_0=2$, $a_1=2$, $a_2=1$. These results are in agreement with Theorem \ref{explicit}, as one can easily check, taking into account that  $V_4=4\pi$, $H_1=H_2=1$ for any $x \in \mathbb{S}^3$ and  $\Vol(\mathbb{S}^3)=2\pi^2$.

\section{Proof of Theorem \ref{corollary:main}}

In this section, we adapt an argument of Zelditch in \cite{z} and combine it with an analysis of the first three heat invariants to prove Theorem \ref{corollary:main}. 

\subsection{Zelditch's theorem on multiplicities}\label{51}
\begin{lemma}\label{lemma:simpconn}
Suppose that $\Omega$ is a compact domain in $\mathbb R^3$, with smooth and connected boundary $M$, and with Steklov spectrum equal to that of the ball of radius $\rho$ in $\mathbb R^3$. Then $M$ is in fact simply connected.
\end{lemma}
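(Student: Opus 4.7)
The plan is to extract, from the matching of the full Steklov spectrum with that of $B_\rho$, enough spectral-asymptotic information to force the geodesic flow on $M$ to be periodic, and then to invoke the classification of Zoll surfaces.

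The Steklov spectrum of $B_\rho$ is extraordinarily rigid: by \eqref{ballmult}, the eigenvalues are the numbers $k/\rho$ with multiplicity $2k+1$, $k=0,1,2,\ldots$, and these multiplicities grow linearly in the eigenvalue, saturating the Hörmander bound for the multiplicity of a first-order elliptic pseudodifferential operator on a two-dimensional manifold. Transferring this information to $\mathcal D$ on $M$ via the hypothesized equality of spectra, the Weyl counting function in \eqref{Weyllaw} admits a two-term expansion in which the remainder attains the maximal possible order $O(\sigma^{n-2})=O(\sigma)$; equivalently, each spectral cluster around $k/\rho$ contains the maximal number of eigenvalues allowed by general principles.

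The next step, following the approach of Zelditch \cite{z}, is to show that this maximal clustering of the spectrum forces the Hamiltonian flow of the principal symbol of $\mathcal D$ to be \emph{periodic}. The Duistermaat-Guillemin improvement of Weyl's law converts the non-$o(\sigma)$ remainder into the statement that closed orbits of the principal-symbol Hamiltonian flow form a set of positive Liouville measure in the cosphere bundle of $M$; the further cluster-by-cluster saturation of multiplicities, analyzed via the semiclassical trace formula as in \cite{z}, upgrades this to the conclusion that every orbit is closed with a common period. Since the principal symbol of $\mathcal D$ is $|\xi|_g$, this Hamiltonian flow coincides with the geodesic flow on $(M,g)$, so $(M,g)$ is a Zoll surface.

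Finally, I would invoke the classical theorem (due to Bott-Samelson-Wadsley; see Besse's book \emph{Manifolds all of whose geodesics are closed}) that a closed connected Riemannian surface all of whose geodesics are closed is diffeomorphic to $\mathbb S^2$ or $\mathbb{RP}^2$. Since $M=\partial\Omega$ for $\Omega\subset\mathbb R^3$, $M$ is orientable, which rules out $\mathbb{RP}^2$ and leaves $M\cong\mathbb S^2$; in particular $M$ is simply connected. The main obstacle is the middle step: the passage from ``positive measure of periodic orbits'' to ``all orbits periodic''. The former falls directly out of Duistermaat-Guillemin, but the latter is genuinely finer and relies on the \emph{uniform} saturation of multiplicities across all clusters; the subprincipal symbol of $\mathcal D$ computed in \eqref{rzeroatp} enters the trace-formula analysis only at lower order and does not affect this conclusion.
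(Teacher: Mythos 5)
Your overall strategy --- use the multiplicities \eqref{ballmult} to force the geodesic flow on $M$ to be periodic, conclude that $M$ is Zoll, and then classify Zoll surfaces --- is exactly the route the paper takes; it is Zelditch's argument from \cite{z} adapted to the Dirichlet-to-Neumann operator. The two endpoints of your argument are sound: the multiplicities $2k+1$ do satisfy the hypothesis $m_k=ak^{n-2}+\mathcal O(k^{n-3})$ of the paper's Proposition \ref{prop:zelditch}, and the classification of Zoll surfaces via \cite{be} (together with orientability of $\partial\Omega$) does give $M\cong\mathbb S^2$.

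The problem is the step you yourself flag as ``the main obstacle'': the passage from maximal spectral clustering to \emph{every} geodesic being periodic. Duistermaat--Guillemin only yields that the periodic orbits form a set of positive Liouville measure, and appealing to ``uniform saturation of multiplicities across all clusters'' is a description of what needs to be proved, not a proof. The paper closes this gap with a localized contradiction argument: if some geodesic is not periodic, then for every $T$ the cone $\Gamma_T$ of covectors with return time exceeding $T$ is open and nonempty; one chooses a zeroth-order pseudodifferential operator $B$ microsupported in $\Gamma_T$, applies H\"ormander's refined Weyl law (Theorem 29.1.5 of \cite{ho}) to $N(\lambda,B^*B)$, whose remainder is bounded by $(C/T)\,\bar b\,\lambda^{n-2}$ precisely because $B$ sees only long-return-time trajectories, extracts the cluster traces $\Tr B^*B|_{E_k}$ from the jumps of that remainder, sums them using the multiplicity hypothesis, and contradicts the leading term $(2\pi)^{-(n-1)}\bar b\lambda^{n-1}$ once $T$ is chosen large. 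One must also verify, as the paper does, that the nonvanishing subprincipal symbol $r_0$ of $\mathcal D$ contributes only an $\mathcal O(\lambda^{n-2})$ term to \eqref{modweyl} and the Poisson-bracket term integrates to zero, so the contradiction survives; your assertion that $r_0$ ``enters only at lower order'' is correct but requires this check. Finally, periodicity of all geodesics gives a \emph{common} period only after invoking Wadsley's theorem (Besse (0.40)), which is the last ingredient needed for the Zoll property. Without the localized argument your proof does not go through.
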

\begin{proof} The lemma in fact follows from a stronger result, which is an adaptation of a similar result of Zelditch for the Laplacian (Theorem A in \cite{z}).
\begin{proposition}\label{prop:zelditch} Let $U$ be a compact Riemannian manifold of dimension $n$ with smooth boundary $(Y,g)$, and let $0=\lambda_0<\lambda_1<\lambda_2<\ldots$ be the \emph{distinct} eigenvalues of the Dirichlet-to-Neumann operator $\D$, with multiplicities $m_0$, $m_1$, $\ldots$. Suppose there exists $a>0$ such that
\[m_k=ak^{n-2}+\mathcal O(k^{n-3}).\]
Then $(Y,g)$ is Zoll: that is, all geodesics on $Y$ are periodic with a common period.
\end{proposition}

\begin{proof} The proof is closely analogous to the proof in \cite{z}, which is in turn based on work of Ivrii and H\"ormander. Let $T^*Y$ be the cotangent bundle, and let $\Pi^*(y,\eta)$ be the microlocal period function on $T^{*}Y\setminus\{0\}$, equal to the period of the geodesic corresponding to $(y,\eta)$ if it is periodic and equal to $\infty$ if it is not. Assume for contradiction that there is a non-periodic geodesic; then the set $\Gamma_T$ of $(y,\eta)$ for which $\Pi^*(y,\eta)>T$ is a nonempty open cone for each positive $T$. Fix a large positive $T$ to be determined later. As in \cite{z}, let $B$ be a self-adjoint pseudodifferential operator of order zero with microlocal support nontrivial and contained in $\Gamma_T$, and let $b$ be the principal symbol of $B^*B$. Then define
\[N(\lambda,B^*B)=\sum_{\lambda_k\leq\lambda}\phantom{}^{*} \Tr B^*B|_{E_k},\]
where $E_k$ is the $\mathcal D$-eigenspace corresponding to $\lambda_k$, and the sum $\sum^*$ is over distinct eigenvalues. 

There is again a modified Weyl law for $N(\lambda,B^*B)$ given in the proof of Theorem 29.1.5 in H\"ormander \cite{ho}. The difference with \cite{z} is that the subprincipal symbol of $\mathcal D$, $r_0(y,\eta)$, is not necessarily zero. On the other hand, the subprincipal symbol of $B^*B$ is still zero, and the integral of the Poisson bracket $\{b,|\eta|\}$ is still zero, so from \cite{ho} we have:
\begin{equation}\label{modweyl}N(\lambda,B^*B)=(2\pi)^{-(n-1)}\left(\iint_{|\eta|<\lambda}b\ dy\ d\eta+\partial_{\lambda}\iint_{|\eta|<\lambda}r_0(y,\eta)b(y,\eta)\ dy\ d\eta\right)+R(\lambda,B^*B),\end{equation}
where the remainder $R(\lambda,B^*B)$ satisfies the estimate
\begin{equation}\label{remest}\limsup_{\lambda\rightarrow\infty}\lambda^{-(n-2)}\left|R(\lambda,B^*B)\right|\leq\frac{C}{T}\left|\iint_{|\eta|<1}b\ dy\ d\eta\right|.\end{equation}
We write $\bar{b}=\iint_{\eta<1}b\ dy\ d\eta$ and note that since $B$ has nontrivial microlocal support, $\bar{b}>0$.

The first two terms in (\ref{modweyl}) are continuous in $\lambda$, which allows us to write
\[\Tr B^*B|_{E_k}=\lim_{\epsilon\rightarrow 0}(R(\lambda_k+\epsilon,B^*B)-R(\lambda_k-\epsilon,B^*B)).\] Hence for $\lambda_k$ greater than some $\lambda_0(T)$ (it may depend on $T$), we have
\[\Tr B^*B|_{E_k}\leq\frac{2(C+1)}{T}\bar b\lambda_k^{n-2}.\] Summing gives
\begin{equation}\label{estim}N(\lambda,B^*B)\leq\frac{2(C+1)}{T}\bar b\sum_{\lambda_k\leq\lambda}\phantom{}^{*}\lambda_k^{n-2}+\mathcal O_T(1).\end{equation}

Next, use the multiplicity assumption, precisely as in \cite{z}, to rewrite (\ref{estim}); the proof is identical to that in \cite{z}, albeit with $\lambda_k$ here replacing $\sqrt{\lambda_k}$ in \cite{z}, so we omit it. We conclude that
\begin{equation}\label{estimtwo}N(\lambda,B^*B)\leq\frac{D}{T}\bar b\lambda^{n-1}+\mathcal O(\lambda^{n-2})+\mathcal O_T(1),\end{equation}
where $D$ is a constant depending only on $n$ and the volume of $Y$. Note that the first term in (\ref{modweyl}) is exactly $(2\pi)^{-(n-1)}\bar{b}\lambda^{n-1}$, and all other terms are $\mathcal O(\lambda^{n-2})$. Thus fixing $T>(2\pi)^{n-1} D$ and then letting $\lambda\rightarrow\infty$ in (\ref{estimtwo}) contradicts (\ref{modweyl}).

We have now shown that all geodesics on $Y$ are periodic; as in \cite{z}, we now apply theorem (0.40) in \cite{be} to conclude that all the closed geodesics must have a common period, and therefore that $(Y,g)$ is Zoll. \end{proof}

\begin{remark}  Suppose, on the other hand, that $U$ is a manifold with boundary $Y$ having the property that the periodic geodesics on $Y$ form a set of measure zero in $T^*Y$. Then, as follows from the results of \cite{DG} (see also \cite[Chapter 29]{ho}),  there is a two-term Weyl law for the Steklov eigenvalues.  We may write it as 
\begin{equation}
\label{wl2}
\#(\sigma_k\leq\sigma)=\frac{a_0}{(n-1)!}\sigma^{n-1}+\frac{a_1}{(n-2)!}\sigma^{n-2}+o(\sigma^{n-2}),
\end{equation}
where $a_0$ and $a_1$ are the heat invariants.  Here we have used the Laplace transform to relate the heat trace and the counting function.
\end{remark}

Now we finish the proof of Lemma \ref{lemma:simpconn}. We see from (\ref{ballmult}) that $\Omega$ satisfies the hypothesis of Proposition \ref{prop:zelditch}. Therefore $M$ must be Zoll; in fact, the same argument holds for any domain in $\mathbb R^n$ which is Steklov-isospectral to a ball. Since all connected Zoll surfaces embedded in $\mathbb R^3$ are topological spheres \cite{be}, we conclude that  $M$ is simply connected. \end{proof}

On the other hand, since there are large families of Zoll surfaces \cite{gu}, we cannot immediately conclude that $M$ is a sphere.

\subsection{Application of heat invariants}\label{52} We now use the heat invariants we have computed to finish the proof of Theorem \ref{corollary:main}. Let $\chi(M)$ be the Euler characteristic. By (\ref{3d}), the Gauss-Bonnet theorem, and Lemma \ref{lemma:simpconn}, we know that the second Steklov heat invariant

\begin{equation}
\label{a2eq}
a_2=\frac{1}{16\pi}\int_M H_1^2+\frac{1}{24}\chi(M)=\frac{1}{16\pi}\int_M H_1^2+\frac{1}{12}.
\end{equation}
Therefore $\int_M H_1^2=\int_{S_\rho} H_1^2$. On the other hand, we already know from the first two heat invariants that $\Vol(M)$ and $\int_M H_1$ are Steklov spectral invariants, so $\Vol(M)=\Vol(S_\rho)$ and $\int_M H_1=\int_{S_\rho} H_1$. Therefore
\begin{equation}\label{punchline}\sqrt{\Vol(M)}\left(\int_M H_1^2\right)^{1/2}-\left|\int_M H_1\right|=\sqrt{\Vol(S_\rho)}\left(\int_{S_\rho} H_1^2\right)^{1/2}-\left|\int_{S_\rho} H_1\right|=0.\end{equation}
By the Cauchy-Schwarz inequality, $H_1$ must be constant on $M$. However, the only embedded compact surfaces of constant mean curvature in $\mathbb R^3$ are round spheres \cite{Al}, so we conclude that $M$ is itself a sphere of radius $\rho$ and therefore $\Omega$ is isometric to $B_{\rho}$. This completes the proof.

\section{Proofs of auxiliary lemmas}

The following two proofs are given by lengthy but straightforward computations.

\subsection{Proof of Lemma \ref{compute}} 
\label{pcompute}
To analyze the normal integrand (\ref{normalint}), first recall the expression for $r_0(P,\xi)$ given in (\ref{rzeroatp}). Plugging in this expression takes care of the first and last terms in (\ref{normalint}); it remains only to analyze the middle term $\frac{1}{2|\xi|}\partial_{x^n}\hat r_0$. Examining the expression (\ref{writtenout}) for $\hat r_0$, we notice that the first and third terms (the imaginary part of $\hat r_0$) are odd in $\xi$, and hence the same is true after applying a normal derivative and dividing by $2|\xi|$. Therefore
\[\frac{1}{2|\xi|}\partial_{x^n}\hat r_0\cong\frac{1}{2|\xi|}\partial_{x^n}\left(-\frac{1}{4|\xi|^2}\sum_{\alpha,\beta}g^{\alpha\beta,n}\xi^{\alpha}\xi^{\beta}
-\frac{1}{4}\sum_{\alpha,\beta}g^{\alpha\beta}g_{\alpha\beta,n}\right)\]
\[=-\frac{1}{8|\xi|^3}\sum_{\alpha,\beta}g^{\alpha\beta,nn}\xi^{\alpha}\xi^{\beta}-\frac{1}{8|\xi|}\sum_{\alpha,\beta}g^{\alpha\beta,n}g_{\alpha\beta,n}-\frac{1}{8|\xi|}\sum_{\alpha,\beta}g^{\alpha\beta}g_{\alpha\beta,nn}.\]
\[\cong-\frac{1}{8|\xi|^3}\sum_{\alpha}g^{\alpha\alpha,nn}(\xi^{\alpha})^2+\frac{1}{2|\xi|}\sum_{\alpha}\lambda_{\alpha}^2-\frac{1}{8|\xi|}\sum_{\alpha}g_{\alpha\alpha,nn}.\]
In the final step of this calculation, we used the fact that $|\xi|^{-3}\xi^{\alpha}\xi^{\beta}\cong 0$ whenever $\alpha\neq\beta$. Combining this calculation with the rest of the integrand completes the proof of the normal portion of Lemma \ref{compute}.

\subsubsection*{Tangential integrand: initial computations} Now consider the tangential portion. The first component of $b_t$ is $-i(\frac{1}{2}+\frac{1}{2|\xi|})\sum_{\gamma}\partial_{\xi^{\gamma}}\hat r_1\partial_{x^{\gamma}}\hat r_0$. Notice that
\[(\partial_{\xi^{\gamma}}\hat r_1)(P)=-\frac{1}{2|\xi|}g^{\alpha\beta}(\delta^{\gamma\beta}\xi^{\alpha}+\delta^{\gamma\alpha}\xi^{\beta})=-\frac{\xi^{\gamma}}{|\xi|},\]
which is odd in $\xi$. Each term in the real part of $\partial_{x^{\gamma}}\hat r_0$ will be a power of $|\xi|$ times a polynomial of even degree in $\xi$, so after multiplying by $\xi^{\gamma}/|\xi|$, it will be equivalent to zero. We therefore need to consider only the imaginary part of $\partial_{x^{\gamma}}\hat r_0$, and hence only the imaginary part of $\hat r_0$ itself.
However, from (\ref{writtenout}), each term in the imaginary part of $\hat r_0$ is multiplied by a first derivative of the metric or the log of the volume element, which is zero at $P$. Therefore, the $\partial_{x^{\gamma}}$ derivative must always hit that term, or else the result is zero at $P$. Relabeling the dummy variable $\gamma$ in (\ref{writtenout}) as $\epsilon$, to avoid confusion with $\partial_{x^{\gamma}}$, we conclude that
\begin{multline}\partial_{x^{\gamma}}\hat r_0(P)=\left(-\frac{i}{8|\xi|^3}\sum_{\epsilon}\left[(\sum_{\alpha,\beta}g^{\alpha\beta}(\delta^{\epsilon\alpha}\xi^{\beta}+\delta^{\epsilon\beta}\xi^{\alpha}))(\sum_{\alpha,\beta}g^{\alpha\beta,\epsilon\gamma}\xi^{\alpha}\xi^{\beta})\right]\right.\\ \left.+\frac{i}{2|\xi|}\sum_{\alpha,\beta}(\frac{1}{2}g^{\alpha\beta}\frac{\partial^2}{\partial_{x^{\gamma}}\partial_{x^\alpha}}\log\delta+g^{\alpha\beta,\alpha\gamma})\xi^{\beta}\right)(P).\end{multline}
Now plug in the values of the metric at $P$, and then multiply by $i\xi^{\gamma}(\frac{1}{2}+\frac{1}{2|\xi|})$. As before, we also note that any term of the form $|\xi|^{-k}\xi^{\alpha}\xi^{\beta}$ is equivalent to zero whenever $\alpha\neq\beta$. We get that the first component of $b_t(P,\xi)$ is equivalent to:
\[(\frac{1}{8|\xi|^5}+\frac{1}{8|\xi|^4})\sum_{\alpha,\beta,\gamma,\epsilon}g^{\alpha\beta,\gamma\epsilon}(P)\xi^{\alpha}\xi^{\beta}\xi^{\gamma}\xi^{\epsilon}-(\frac{1}{8|\xi|^3}+\frac{1}{8|\xi|^2})\sum_{\alpha}(\frac{\partial^2}{\partial_{(x^{\alpha})^2}}\log\delta)(P)(\xi^{\alpha})^2\]\begin{equation}\label{termthree}-(\frac{1}{4|\xi|^3}+\frac{1}{4|\xi|^2})\sum_{\alpha,\beta}g^{\alpha\beta,\alpha\beta}(P)(\xi^{\beta})^2.\end{equation}

For the remaining terms, we first compute second derivatives of $\tilde r_1$ in both the $x$ variables and the $\xi$ variables:
\[(\partial_{\xi^{\gamma}}\partial_{\xi^{\epsilon}}\tilde r_1)(P)=(\frac{\partial_{\xi^{\gamma}}q_2\partial_{\xi^{\epsilon}}q_2}{4|\xi|^3}-\frac{\partial_{\xi^{\gamma}}\partial_{\xi^{\epsilon}}q_2}{2|\xi|})(P)=\frac{\xi^{\gamma}\xi^{\epsilon}}{|\xi|^3}-\frac{\delta^{\gamma\epsilon}}{|\xi|};\]
\[(\partial_{x^{\gamma}}\partial_{x^{\epsilon}}\tilde r_1)(P)=(\frac{\partial_{x^{\gamma}}q_2\partial_{x^{\epsilon}}q_2}{4|\xi|^3}-\frac{1}{2|\xi|}\partial_{x^{\gamma}}\partial_{x^{\epsilon}}q_2)(P)=-\frac{1}{2|\xi|}\sum_{\alpha,\beta}g^{\alpha\beta,\gamma\epsilon}(P)\xi^{\alpha}\xi^{\beta}.\]
From (\ref{tangentialint}), the remaining terms in $b_t(P,\xi)$ are hence equivalent to
\begin{equation}\label{termtangtwo}(\frac{1}{12|\xi|^3}+\frac{1}{8|\xi|^4}+\frac{1}{8|\xi|^5})\sum_{\alpha,\beta,\gamma,\epsilon}g^{\alpha\beta,\gamma\epsilon}(P)\xi^{\alpha}\xi^{\beta}\xi^{\gamma}\xi^{\epsilon}
-(\frac{1}{8|\xi|^2}+\frac{1}{8|\xi|^3})\sum_{\alpha,\beta,\gamma}g^{\alpha\beta,\gamma\gamma}(P)\xi^{\alpha}\xi^{\beta}.\end{equation}

\subsubsection*{Curvatures} We now use the well-known Taylor expansion of the metric in Riemannian normal coordinates (see \cite{v}, for example) to relate the second derivatives of the metric and the volume element to the intrinsic curvatures of the boundary $M$. Let $R_{ijkl}$ be the Riemann curvature tensor of the boundary $M$ at the point $P$; then we have
\[g_{\alpha\beta}(x)=\delta_{\alpha\beta}-\frac{1}{3}R_{\alpha\mu\beta\nu}x^{\mu}x^{\nu}+\mathcal O(|x|^3).\]
An easy inverse argument using Taylor series gives:
\[g^{\alpha\beta}(x)=\delta_{\alpha\beta}+\frac{1}{3}R_{\alpha\mu\beta\nu}x^{\mu}x^{\nu}+\mathcal O(|x|^3),\]
and hence
\begin{equation}\label{helpfulone}g^{\alpha\beta,\gamma\epsilon}(P)=\frac{1}{3}(R_{\alpha\gamma\beta\epsilon}+R_{\alpha\epsilon\beta\gamma}).\end{equation}
As for the volume element, we have from \cite{v}:
\[\delta(x)=1-\frac{1}{3}R_{\mu\nu}x^{\mu}x^\nu+\mathcal O(|x|^3),\]
so by elementary Taylor series arguments
\[\log\delta(x)=-\frac{1}{3}R_{\mu\nu}x^{\mu}x^\nu+\mathcal O(|x|^3),\]
and therefore
\begin{equation}\label{helpfulthree}(\partial_{\xi^{\gamma}}\partial_{\xi^{\epsilon}}\log\delta)(P)=-\frac{1}{3}
(R_{\gamma\epsilon}+R_{\epsilon\gamma})=-\frac{2}{3}R_{\gamma\epsilon}.\end{equation}

Combining these observations with (\ref{termthree}) and (\ref{termtangtwo}), and also removing the $\alpha\neq\beta$ part of the last term of (\ref{termtangtwo}) (which is equivalent to zero), we get:
\[b_t(P,\xi)\cong(\frac{1}{36|\xi|^3}+\frac{1}{12|\xi|^4}+\frac{1}{12|\xi|^5})\sum_{\alpha,\beta,\gamma,\epsilon}(R_{\alpha\gamma\beta\epsilon}+R_{\alpha\epsilon\beta\gamma})\xi^{\alpha}\xi^{\beta}\xi^{\gamma}\xi^{\epsilon}\]\[+(\frac{1}{12|\xi|^3}+\frac{1}{12|\xi|^2})\sum_{\alpha}R_{\alpha\alpha}(\xi^{\alpha})^2-(\frac{1}{12|\xi|^3}+\frac{1}{12|\xi|^2})\sum_{\alpha,\beta}(R_{\alpha\alpha\beta\beta}+R_{\alpha\beta\beta\alpha})(\xi^{\beta})^2\]\begin{equation}\label{simplifythis}
-(\frac{1}{12|\xi|^2}+\frac{1}{12|\xi|^3})\sum_{\alpha,\gamma}R_{\alpha\gamma\alpha\gamma}(\xi^{\alpha})^2.\end{equation}

\subsubsection*{Final simplification} To simplify (\ref{simplifythis}) further, we use the symmetries of the curvature tensor:
\[R_{ijkl}=-R_{jikl}=-R_{ijlk}=R_{klij}.\]
From these symmetries, we immediately conclude that $R_{\alpha\alpha\beta\beta}=0$ for all $\alpha$ and $\beta$, and also that $R_{\alpha\beta\beta\alpha}=-R_{\alpha\beta\alpha\beta}$. Hence the third and fourth terms of (\ref{simplifythis}) cancel. Moreover:
\begin{proposition} For any $k\geq -3-n$ (so that the integral over the tangent space makes sense), \[|\xi|^k\sum_{\alpha,\beta,\gamma,\epsilon}(R_{\alpha\gamma\beta\epsilon}+R_{\alpha\epsilon\beta\gamma})\xi^{\alpha}\xi^{\beta}\xi^{\gamma}\xi^{\epsilon}\cong 0.\]
\end{proposition}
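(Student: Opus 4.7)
The plan is to show that the polynomial
\[P(\xi) := \sum_{\alpha,\beta,\gamma,\epsilon}(R_{\alpha\gamma\beta\epsilon}+R_{\alpha\epsilon\beta\gamma})\xi^{\alpha}\xi^{\beta}\xi^{\gamma}\xi^{\epsilon}\]
vanishes identically as a function of $\xi$. Once this is established, the assertion $|\xi|^k P(\xi) \cong 0$ is automatic for every $k$ in the stated range (the lower bound $k \geq -3-n$ serves only to ensure convergence of the integral defining $\cong$). The underlying reason is a clash of symmetries: $\xi^{\alpha}\xi^{\beta}\xi^{\gamma}\xi^{\epsilon}$ is totally symmetric in its four exponents, whereas the Riemann tensor is antisymmetric in its first two slots, and contracting an antisymmetric pair of slots against a symmetric product of indices kills the result.

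To execute this, I would simply relabel summation indices. In $R_{\alpha\gamma\beta\epsilon}$ the first two slots are occupied by the summation indices $\alpha$ and $\gamma$; swapping their names and invoking $R_{\gamma\alpha\beta\epsilon}=-R_{\alpha\gamma\beta\epsilon}$ gives
\[\sum_{\alpha,\beta,\gamma,\epsilon} R_{\alpha\gamma\beta\epsilon}\xi^{\alpha}\xi^{\beta}\xi^{\gamma}\xi^{\epsilon} \;=\; \sum_{\alpha,\beta,\gamma,\epsilon} R_{\gamma\alpha\beta\epsilon}\xi^{\gamma}\xi^{\beta}\xi^{\alpha}\xi^{\epsilon} \;=\; -\sum_{\alpha,\beta,\gamma,\epsilon} R_{\alpha\gamma\beta\epsilon}\xi^{\alpha}\xi^{\beta}\xi^{\gamma}\xi^{\epsilon},\]
so this sum equals its own negative and hence is zero pointwise in $\xi$. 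Applying the identical argument to $R_{\alpha\epsilon\beta\gamma}$, using antisymmetry in the pair of summation indices $(\alpha,\epsilon)$ and the symmetry of the $\xi$-product under $\alpha\leftrightarrow\epsilon$, shows that the second sum also vanishes pointwise. There is no substantive obstacle: the proposition is a one-line symmetry observation (equivalently, the Bianchi identity combined with the first-pair antisymmetry forces the totally symmetric part of the Riemann tensor to vanish), which is presumably why it is stated as a quick aside at the end of the longer computation in Lemma~\ref{compute}.
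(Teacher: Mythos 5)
Your proof is correct, and it takes a genuinely different (and in fact stronger) route than the paper's. The paper never shows the polynomial vanishes pointwise: it integrates, using the fact that a monomial $\xi^{\alpha}\xi^{\beta}\xi^{\gamma}\xi^{\epsilon}$ integrated against the radial weight $e^{-|\xi|}|\xi|^k$ vanishes unless the four indices pair off, then enumerates the three pairings ($\alpha=\beta,\gamma=\epsilon$; $\alpha=\gamma,\beta=\epsilon$; $\alpha=\epsilon,\beta=\gamma$) and checks that the resulting coefficients $2R_{\alpha\beta\alpha\beta}+R_{\alpha\beta\beta\alpha}+R_{\alpha\beta\beta\alpha}$ cancel using the full set of symmetries $R_{ijkl}=-R_{jikl}=-R_{ijlk}=R_{klij}$. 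You instead observe that in each of the two summands the first slot pair of the Riemann tensor is contracted against the symmetric product $\xi^{\alpha}\xi^{\gamma}$ (respectively $\xi^{\alpha}\xi^{\epsilon}$), so each sum is the full contraction of an antisymmetric tensor with a symmetric one and vanishes identically in $\xi$. This is shorter, uses only first-pair antisymmetry, and yields a strictly stronger conclusion (pointwise vanishing, hence $\cong 0$ for any radial weight, not just $e^{-|\xi|}$); it is also consistent with the Gauss lemma, which forces $R_{\alpha\mu\beta\nu}x^{\alpha}x^{\mu}x^{\beta}x^{\nu}=0$ from the normal-coordinate expansion of $g$. One small nitpick: your parenthetical appeal to the Bianchi identity is unnecessary --- antisymmetry in a single pair of slots already kills the totally symmetric part --- but this does not affect the argument.
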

\begin{proof}
Integration of a multiple of $\xi^{\alpha}\xi^{\beta}\xi^{\gamma}\xi^{\epsilon}$ against an even function of $\xi$ vanishes unless the four indices pair off into groups of two. On the other hand, if all four are the same, the curvature coefficients are zero. We may therefore consider each possible pairing separately without worrying about double counting; for example, if $\alpha=\beta$ and $\gamma=\epsilon$, the sum becomes
\[2\sum_{\alpha,\gamma}R_{\alpha\gamma\alpha\gamma}(\xi^{\alpha})^2(\xi^{\gamma})^2.\]
Relabeling and adding up the possibilities, using the symmetries of the curvature tensor, the first sum becomes
\[\sum_{\alpha,\beta}(2R_{\alpha\beta\alpha\beta}+R_{\alpha\beta\beta\alpha}+R_{\alpha\beta\beta\alpha})(\xi^{\alpha})^2(\xi^{\beta})^2=0,\]
which completes the proof of the proposition.
\end{proof}

As a consequence of the proposition and the preceding remarks, the first term in (\ref{simplifythis}) is equivalent to zero, and we are left with \[b_t(P,\xi)\cong (\frac{1}{12|\xi|^3}+\frac{1}{12|\xi|^2})\sum_{\alpha}R_{\alpha\alpha}(\xi^{\alpha})^2.\] This completes the proof of Lemma \ref{compute}.

\subsection{Proof of Lemma \ref{scalcurvform}} 
\label{pscal}
The proof is by computation of the scalar curvature $R_{\Omega}$. At the point $P$, using Einstein notation, we have
\begin{equation}\label{scalcurv}R_{\Omega}= \Gamma^c_{aa,c}-\Gamma^c_{ac,a}+\Gamma^{d}_{aa}\Gamma^c_{cd}-\Gamma^d_{ac}\Gamma^c_{ad}.\end{equation}
Moreover, if the sums are taken only over the indices from 1 to $n-1$, rather than from 1 to $n$, we obtain the scalar curvature of the boundary. So $R_{\Omega}$ equals $R_M$ plus the terms in (\ref{scalcurv}) where at least one of $a$, $c$, or $d$ is $n$. On the other hand, the Christoffel symbols are given by
\[\Gamma_{ij}^k=\frac{1}{2}\sum_m(g_{jm,i}+g_{mi,j}-g_{ij,,m})g^{km}.\]
Recall that $g_{\alpha n}$ and $g^{\alpha n}$ are identically zero in a neighborhood of $P\subset\Omega$ for any $\alpha\leq n-1$, and $g_{nn}$ and $g^{nn}$ are identically 1. As a consequence, whenever two or more of $i$, $j$, and $k$ are equal to $n$, the Christoffel symbol $\Gamma_{ij}^k$ is zero in a neighborhood of $P$.

We first analyze the first two terms in (\ref{scalcurv}) where at least one index is $n$. Using the observation above on the vanishing of the Christoffel symbols, we obtain that at $P$,
\[\sum_{a=1}^{n-1}\Gamma_{aa,n}^n-\sum_{c=1}^{n-1}\Gamma^{c}_{nc,n}=\sum_{\alpha=1}^{n-1}\Gamma_{\alpha\alpha,n}^n-\sum_{\alpha=1}^{n-1}\Gamma^{\alpha}_{n\alpha,n}\]\[=\frac{1}{2}\sum_{\alpha=1}^{n-1}\partial_{x^n}(\sum_{m}(g_{\alpha m,\alpha}+g_{m \alpha,\alpha}-g_{\alpha\alpha, m})g^{nm})+\frac{1}{2}\sum_{\gamma=1}^{n-1}\partial_{x^n}(\sum_m(g_{\alpha m, n}+g_{mn,\alpha}-g_{n\alpha,n})g^{\alpha m})\]
\[=\frac{1}{2}\sum_{\alpha=1}^{n-1}(-g_{\alpha\alpha,nn})-\frac{1}{2}\sum_{\alpha=1}^{n-1}\partial_{x^n}(\sum_{m=1}^{n-1}g_{\alpha m,n}g^{\alpha m})=-\sum_{\alpha=1}^{n-1}g_{\alpha\alpha,nn}+2\sum_{\alpha=1}^{n-1}\lambda_{\alpha}^2.\]

Now consider the final two terms of (\ref{scalcurv}). Since there are no derivatives of the Christoffel symbols involved in these terms, we can plug in the metric at $P$ and write
\[\Gamma_{ij}^k(P)=\frac{1}{2}(g_{jk,i}+g_{ki,j}-g_{ij,k})(P).\] If more than one of $i$, $j$, or $k$ is $n$, then $\Gamma_{ij}^k$ vanishes; on the other hand, for $\alpha,\beta\neq n$, we compute
\[\Gamma_{\alpha\beta}^n(P)=\lambda_{\alpha}\delta_{\alpha\beta}; \Gamma_{\alpha n}^{\beta}=\Gamma_{\beta n}^{\alpha}=-\lambda_{\alpha}\delta_{\alpha\beta}.\]

If more than one of $a$, $c$, or $d$ is equal to $n$, the final two terms of (\ref{scalcurv}) vanish; if none are $n$, then we get part of the scalar curvature $R_M$. If one is $n$, then there are three possibilities:
\begin{itemize}
\item Suppose $a=n$, $c\neq n$, $d\neq n$. Then the third term of (\ref{scalcurv}) is zero; the last term is
\[\sum_{c,d=1}^{n-1}-(-\lambda_{c}\delta_{cd})^2=-\sum_{\alpha=1}^{n-1}\lambda_{\alpha}^2.\]
\item Suppose $a\neq n$, $c=n$, $d\neq n$; the third term is again zero, and the last term is
\[\sum_{a,d=1}^{n-1}-(-\lambda_{a}\delta_{ad})(\lambda_{a}\delta_{ad})=\sum_{\alpha=1}^{n-1}\lambda_{\alpha}^2.\]
\item Finally, suppose $a\neq n$, $c\neq n$, $d=n$. Then we get:
\[\sum_{a,c=1}^{n-1}\lambda_a(-\lambda_c)+\sum_{a,c}-(-\lambda_{a}\delta_{ac})(\lambda_a\delta_{ac})=-(n-1)^2H_1^2+\sum_{\alpha}\lambda_{\alpha}^2.\]
\end{itemize}

Combining these three computations with the first two terms of (\ref{scalcurv}) yields
\[R_{\Omega}=R_M+3\sum_{\alpha}\lambda_{\alpha}^2-\sum_{\alpha}g_{\alpha\alpha,nn}-(n-1)^2H_1^2.\]
Rearranging and using (\ref{eigencurv}) completes the proof.

\subsection{Auxiliary Integrals}
\label{auxint}
\begin{lemma} Let $\alpha$ and $\beta$ be any two distinct integers between $1$ and $n-1$. Moreover, let $V_n=\Vol(\mathbb{S}^{n-2})$. For any real $k$ such that the following integrals converge, we have
\begin{equation}\label{intone}\int_{\mathbb R^{n-1}}e^{-|\xi|}|\xi|^k\ d\xi=V_n\Gamma(k+n-1);\end{equation}

\begin{equation}\label{inttwo}\int_{\mathbb R^{n-1}}e^{-|\xi|}|\xi|^{k-2}(\xi^{\alpha})^2\ d\xi=\frac{V_n}{n-1}\Gamma(k+n-1);\end{equation}

\begin{equation}\label{intthree}\int_{\mathbb R^{n-1}}e^{-|\xi|}|\xi|^{k-4}(\xi^{\alpha})^4 d\xi=\frac{3V_n}{n^2-1}\Gamma(k+n-1);\end{equation}

\begin{equation}\label{intfour}\int_{\mathbb R^{n-1}}e^{-|\xi|}|\xi|^{k-4}(\xi^{\alpha})^2(\xi^{\beta})^2 d\xi=\frac{V_n}{n^2-1}\Gamma(k+n-1).\end{equation}
\end{lemma}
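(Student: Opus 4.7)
The plan is to reduce each identity to the radial-angular decomposition of Lebesgue measure on $\mathbb R^{n-1}$. Writing $\xi = r\omega$ with $r = |\xi|$ and $\omega \in \mathbb{S}^{n-2}$, we have $d\xi = r^{n-2}\,dr\,d\omega$, so every integral factors into an elementary gamma-type radial integral times a polynomial angular integral on the unit sphere.

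For (\ref{intone}), this substitution directly gives $V_n \cdot \int_0^\infty e^{-r} r^{k+n-2}\,dr = V_n\,\Gamma(k+n-1)$. For (\ref{inttwo}), I would exploit rotational invariance: the measure $e^{-|\xi|}|\xi|^{k-2}\,d\xi$ makes the integral independent of the index $\alpha$; summing over $\alpha = 1,\dots,n-1$ collapses $\sum_\alpha (\xi^\alpha)^2 = |\xi|^2$ and reduces the problem to (\ref{intone}), so each summand equals $V_n\,\Gamma(k+n-1)/(n-1)$.

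The real content is in (\ref{intthree}) and (\ref{intfour}). After polar substitution, each becomes $\Gamma(k+n-1)$ times one of the angular moments
\[A = \int_{\mathbb{S}^{n-2}} (\omega^\alpha)^4\, d\omega, \qquad B = \int_{\mathbb{S}^{n-2}} (\omega^\alpha)^2 (\omega^\beta)^2\, d\omega,\]
both independent of the specific indices by symmetry. I would pin down $A$ and $B$ with two independent linear equations. The first comes from integrating the identity $1 = |\omega|^4 = \sum_\alpha (\omega^\alpha)^4 + 2\sum_{\alpha<\beta}(\omega^\alpha)^2(\omega^\beta)^2$ over $\mathbb{S}^{n-2}$, which gives $(n-1)A + (n-1)(n-2)B = V_n$. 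The second I would extract via a Gaussian comparison: applying polar coordinates to the integrals $\int_{\mathbb R^{n-1}} e^{-|\xi|^2/2} (\xi^\alpha)^4\,d\xi$ and $\int_{\mathbb R^{n-1}} e^{-|\xi|^2/2} (\xi^\alpha)^2(\xi^\beta)^2\,d\xi$ factors off the \emph{same} radial integral in both cases, while their Cartesian evaluation as products of one-dimensional Gaussian moments gives $3(2\pi)^{(n-1)/2}$ and $(2\pi)^{(n-1)/2}$ respectively. Hence $A/B = 3$, and solving the $2 \times 2$ system yields $B = V_n/(n^2-1)$ and $A = 3V_n/(n^2-1)$, proving both identities.

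No step is a serious obstacle; the only mild subtlety is the need for a second equation relating $A$ and $B$ beyond the one coming from $|\omega|=1$, and the Gaussian trick supplies it cleanly. An equally valid alternative would be to evaluate $A$ directly using the standard Beta-function formula for monomial integrals on the sphere.
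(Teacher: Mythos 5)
Your proof is correct. The skeleton is the same as the paper's: polar coordinates reduce everything to angular moments times $\Gamma(k+n-1)$, the second integral follows from the first by summing over $\alpha$, and the identity $|\xi|^4=\bigl(\sum_\alpha(\xi^\alpha)^2\bigr)^2$ supplies one linear relation among the fourth moments. Where you diverge is in how the remaining unknown is pinned down. The paper evaluates the fourth integral's angular moment $A=\int_{\mathbb{S}^{n-2}}(\omega^\alpha)^4\,d\omega$ head-on, writing it in explicit spherical coordinates as $V_n$ times the ratio $\int_0^\pi\cos^4\theta\sin^{n-3}\theta\,d\theta\big/\int_0^\pi\sin^{n-3}\theta\,d\theta$ and quoting a table entry (Gradshteyn--Ryzhik 3.621) plus Gamma-function identities to get $3/(n^2-1)$; it then uses the $|\xi|^4$ relation to deduce the mixed moment $B$. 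You instead avoid the trigonometric integral entirely by comparing against the Gaussian $e^{-|\xi|^2/2}$: since the radial factor is common to both fourth-moment integrals, the Cartesian factorization of the Gaussian gives the ratio $A/B=3$ for free, and the $2\times 2$ system then yields both values. Your route is self-contained and arguably cleaner (no reference to integral tables), at the cost of introducing an auxiliary weight; the paper's is more direct but leans on a quoted formula. Both are complete and correct.
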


The first integral may be evaluated directly, and by summing over $\alpha$ and using symmetry we see that the second integral is the first integral times $1/(n-1)$. 

For the third and fourth integrals, one uses $n$-dimensional spherical coordinates to reduce the problem to trigonometric integrals. We first perform the third integral: by symmetry we may let $\alpha=1$. Use the coordinates $(r,\theta_i)$ given by
\[|\xi|=r,\ \xi_1=|\xi|\cos\theta_1,\ \xi_k=|\xi|\cos\theta_k\prod_{i=1}^{k-1}\sin\theta_i\ (\textrm{for } 2\leq k\leq n-2),\ \xi_{n-1}=|\xi|\prod_{i=1}^{n-2}\sin\theta_i.\]
Here $\theta_{n-2}\in[0,2\pi)$ and $\theta_k\in[0,\pi)$ for all other $k$. The integral in $r$ may be performed explicitly and yields $\Gamma(k+n-1)$; the rest of the integral (\ref{intthree}) is
\[\int_{S^{n-2}}\cos^4\theta_1\sin^{n-3}\theta_1\sin^{n-4}\theta_2\ldots\sin\theta_{n-3}\ d\theta_1\ldots d\theta_{n-2}.\]
However, since
\[V_n=\int_{S^{n-2}}\sin^{n-3}\theta_1\sin^{n-4}\theta_2\ldots\sin\theta_{n-3}\ d\theta_1\ldots d\theta_{n-2},\]
we see that (\ref{intthree}) is $\Gamma(k+n-1)$ times $V_n$ times
\[L_n:=\frac{\int_0^{\pi}\cos^4\theta_1\sin^{n-3}\theta_1\ d\theta_1}{\int_0^{\pi}\sin^{n-3}\theta_1\ d\theta_1}.\]
We then use \cite[formula 3.621]{gr} and some identities for the Gamma function to show that $L_n=\frac{3}{n^2-1}$.

Finally we see by writing out $|\xi|^4=(\xi_1^2+\ldots+\xi_{n-1}^2)^2$ and using symmetry that $(n-1)$ times (\ref{intthree}) plus $(n-1)(n-2)$ times (\ref{intfour}) equals (\ref{intone}), which enables us to determine (\ref{intfour}) as well.


\begin{thebibliography}{ABCDE}
\bibitem[Ag]{Ag} M.S. Agranovich,   {\it Some asymptotic formulas for elliptic pseudodifferential operators}, Funk. Anal. Prilozh. 21 (1987), 53-56.
\bibitem[Al]{Al} A.D. Alexandrov, {\it  Uniqueness theorem for surfaces in the large I}, Vestnik Leningrad Univ.
11 (1956), 5–17.
\bibitem[ALM]{alm} L. Alias, J.  de Lira and J.M. Malacarne,  \textit{Constant higher order mean curvature hypersurfaces in Riemannian spaces.} J. Inst. Math. Jussieu, 5 (2006) no. 4, 527-562.
\bibitem[Be]{be} A. Besse, {\it Manifolds all of whose geodesics are closed,} Ergeb, Math., 93, Springer-Verlag, New York, 1978.
\bibitem[BS]{BS} Binoy and G. Santhanam, {\it Sharp upper bound and a comparison theorem for the first nonzero Steklov eigenvalue},  arXiv:1208.1690.
\bibitem[Br]{Brock} {F. Brock,} {\em An isoperimetric
inequality for eigenvalues of the Stekloff problem}, Z. Angew. Math.
Mech. 81 (2001), 69-71.
\bibitem[dC]{dc} M.P. do Carmo, \textit{Riemannian Geometry}. Birkha\"user, Boston, MA 1992.
\bibitem[CEG]{CEG} B. Colbois, A. El Soufi, A. Girouard, {\it Isoperimetric control of the Steklov spectrum},
J.  Func. Anal. 261 (2011) no. 5, 1384-1399.
\bibitem[DG]{DG}  H. Duistermaat and V. Guillemin, \textit{Spectrum of positive elliptic operators and periodic bicharacteristics}, Invent. Math. 29 (1975), 
no.1,  39-79.
\bibitem[Ed]{e} J. Edward, \textit{An inverse spectral result for the Neumann operator on planar domains.} J. Func. Anal. 111 (1993), 312-322.
\bibitem[EW]{ew} J. Edward and S. Wu,  \textit{Determinant of the Neumann operator on smooth Jordan curves.} Proc. Amer. Math. Soc. 111 (1991), no. 2, 357-363.
\bibitem[FS1]{FS1} A. Fraser and R. Schoen,  {\em The first Steklov eigenvalue, conformal geometry, and minimal surfaces.} Adv. Math. 226 (2011), 4011--4030.
\bibitem[FS2]{FS2} A. Fraser and R. Schoen, {\em Eigenvalue bounds and minimal surfaces in the ball},	arXiv:1209.3789.
\bibitem[Gi]{g} P. Gilkey, \textit{Asymptotic formulae in spectral geometry.} CRC Press, Boca Raton, FL 2004.
\bibitem[GiGr]{gg} P. Gilkey and G. Grubb, \textit{Logarithmic terms in asymptotic expansions of heat operator traces.} Comm. in PDE 23 (1998), no. 5-6, 777-792.
\bibitem[GP1]{GP1} A. Girouard and I. Polterovich,  {\it Shape optimization for low Neumann and Steklov eigenvalues},  Math. Meth. Appl. Sci. 33, no. 4 (2010),  501-516.
\bibitem[GP2]{GP2}  A. Girouard and I. Polterovich, {\em Upper bounds for Steklov eigenvalues on surfaces.} ERA-MS 19 (2012), 77--85.
\bibitem[GR]{gr} I.S. Gradshteyn and I.M. Ryzhik, {\it Table of integrals, series, and products}, edited by A. Jeffrey and D. Zwillinger. Academic Press, New York, 7th ed., 2008.
\bibitem[GrSe]{GrSe} G. Grubb and R. Seeley, {\it Weakly parametric pseudodifferential operators and Atiyah--Patodi--Singer boundary problems},
Invent. Math. 121 (1995), 481--529.
\bibitem[Gu]{gu} V. Guillemin, {\it The Radon transform on Zoll surfaces}, Adv. in Math. 22 (1976), 85-119.
\bibitem[HZ]{HZ} A. Hassell and M. Zworski, {\it Resonant rigidity of $S^2$}, J. Func. Anal. 169 (1999), 604-609.
\bibitem[He]{He} A. Henrot, {\it Extremum Problems for Eigenvalues of Elliptic Operators}, Birkh\"auser Verlag, Basel, 2006.
\bibitem[H\"o]{ho} L. H\"ormander, {\it The analysis of partial differential operators, vol. IV}, Grundlehren 275, Springer-Verlag, New York, 1984.
\bibitem[Ja]{Ja} P. Jammes, {\em Prescription du spectre de Steklov dans une classe conforme}, arXiv:1209.4571. 
\bibitem[KKP]{KKP} M. Karpukhin, G. Kokarev and I. Polterovich, {\it Multiplicity bounds for Steklov eigenvalues on Riemannian surfaces}, arXiv:1209.4869.
\bibitem[LU]{lu} J. Lee and G. Uhlmann,  \textit{Determining Isotropic Real-Analytic Conductivities by Boundary Measurements}. Comm. Pure. Appl. Math. 42 (1989), 1097-1112.
\bibitem[Le]{l} Y. Lee,  \textit{Burghelea-Friedlander-Kappeler's gluing formula for the zeta-determinant and its applications to the adiabatic decompositions of the zeta-determinant and the analytic torsion.} Trans. Amer. Math. Soc. 355 (2003), 4093-4110.
\bibitem[Po]{Po} I. Polterovich, {\it Combinatorics of the heat trace on spheres}, Canadian J. Math. 54 (2002), 1086-1099.
\bibitem[Se]{s} R. Seeley,  \textit{Complex Powers of an Elliptic Operator.} pp. 288-307 in \emph{Singular Integrals}, Proc. Symp. Pure Math., Providence, RI 1967.
\bibitem[Tan]{tan} S. Tanno, \textit{Eigenvalues of the Laplacian of Riemannian manifolds,} Tohoku Math. J. (2) 25, no. 3 (1973), 391-403.
\bibitem[Tay]{Ta} M. Taylor, {\em Partial differential equations II. Qualitative studies of linear equations.}
Applied Mathematical Sciences 116, Springer-Verlag, New York, 1996.
\bibitem[Vi]{v} J. Viaclovsky,  {\it Topics in Riemannian geometry}. Lecture notes available online at

\url{http://www.math.wisc.edu/~jeffv/courses/865_Fall_2011.pdf}
\bibitem[We]{We} R. Weinstock, {\it Inequalities for a classical eigenvalue problem}, J. Rat. Mech. Anal. 3 (1954), 343--356.
\bibitem[Z]{z} S. Zelditch, {\it Maximally degenerate Laplacians}, Annales de l'institut Fourier 46, no. 2 (1996), 547-587.
\end{thebibliography}
\end{document}